\newcommand{\normal}{\color{black}}
\theoremstyle{plain}
\newtheorem{theorem}{Theorem}[section]
\newtheorem{corollary}[theorem]{Corollary}
\theoremstyle{definition}
\newtheorem{remark}[theorem]{Remark}
\newtheorem{example}[theorem]{Example}
\newtheorem*{ack}{Acknowledgement}
\numberwithin{equation}{section}
\newcommand{\taus}{\tau}
\newcommand\E{\mathds E}
\newcommand\N{\mathds N}
\newcommand\R{\mathds R}
\newcommand\C{\mathds C}
\newcommand\I{\mathds 1}
\renewcommand\P{\mathds P}
\newcommand\Bscr{\mathscr{B}}
\newcommand\supp{\operatorname{supp}}
\newcommand\RE{\operatorname{Re}}
\renewcommand\d{\mathrm{d}}
\newcommand\e{\mathrm{e}}
\renewcommand\i{\mathop{\mathrm{i}}}
\begin{document}\allowdisplaybreaks
\title[Subordinate Shift Harnack Inequalities]{\bfseries On Shift Harnack Inequalities for Subordinate Semigroups and Moment Estimates for L\'evy Processes}

\author[C.-S.~Deng]{Chang-Song Deng}
\address[C.-S.~Deng]{School of Mathematics and Statistics\\ Wuhan University\\ Wuhan 430072, China}
\curraddr{TU Dresden\\ Fachrichtung Mathematik\\ Institut f\"{u}r Mathematische Stochastik\\ 01062 Dresden, Germany}
\email{dengcs@whu.edu.cn}
\thanks{The first-named author gratefully acknowledges support through the Alexander-von-Humboldt foundation, the National Natural Science Foundation of China (11401442) and the International Postdoctoral Exchange Fellowship Program (2013)}

\author[R.\,L.~Schilling]{Ren\'e L.\ Schilling}
\address[R.\,L.~Schilling]{TU Dresden\\ Fachrichtung Mathematik\\ Institut f\"{u}r Mathematische Stochastik\\ 01062 Dresden, Germany}
\email{rene.schilling@tu-dresden.de}

\subjclass[2010]{60J75, 47G20, 60G51}
\keywords{Shift Harnack inequality, subordination,
subordinate semigroup, L\'evy process}

\date{\today}

\maketitle

\begin{abstract}
    We show that shift Harnack type inequalities (in the sense of F.-Y.~Wang \cite{Wan14}) are preserved
    under Bochner's subordination. The proofs are based on two types of moment estimates for subordinators. As a by-product we establish moment estimates for general L\'evy processes.
\end{abstract}

\section{Introduction and motivation}\label{sec1}

Subordination in the sense of Bochner is a method to generate new (`subordinate') stochastic processes from a given process by a random time change with an independent one-dimensional increasing L\'evy process (`subordinator'). A corresponding notion exists at the level of semigroups.
If the original process is a L\'evy process, so is the subordinate process. For instance, any symmetric $\alpha$-stable L\'evy process can be regarded as a subordinate Brownian motion, cf.~\cite{SSV}. This provides us with another approach to investigate jump processes via the corresponding results for diffusion processes. See \cite{GRW} for the dimension-free Harnack inequality for subordinate semigroups, \cite{SW} for subordinate functional inequalities and \cite{DS14} for the quasi-invariance property under subordination. In this paper, we will establish shift Harnack inequalities, which were introduced in \cite{Wan14}, for subordinate semigroups.

Let $(S_t)_{t\geq0}$ be a subordinator. Being a one-sided L\'evy process, it is uniquely determined by its Laplace transform which is of the form
$$
    \E\,\e^{-uS_t}
    =\e^{-t\phi(u)},\quad u>0,\;t\geq0.
$$
The characteristic exponent $\phi:(0,\infty) \to (0,\infty)$ is a Bernstein function having the following L\'evy--Khintchine representation
\begin{equation}\label{bern}
  \phi(u)
  =bu + \int_{(0,\infty)}\left(1-\e^{-ux}\right) \nu(\d x),
  \quad u>0.
\end{equation}
The drift parameter $b\geq 0$ and the L\'evy measure $\nu$---a measure on $(0,\infty)$ satisfying $\int_{(0,\infty)}(x\wedge1 )\,\nu(\d x)<\infty$---uniquely characterize the Bernstein function. The corresponding transition probabilities $\mu_t:=\P(S_t\in\cdot)$ form a vaguely continuous convolution semigroup of probability measures on $[0,\infty)$, i.e.\ one has $\mu_{t+s}=\mu_t*\mu_s$ for all $t,s\geq0$
and $\mu_t$ tends weakly to the Dirac measure concentrated at $0$ as $t\to 0$.

If $(X_t)_{t\geq 0}$ is a Markov process with transition semigroup $(P_t)_{t\geq 0}$, then the subordinate process is given by the random time-change $X_t^\phi := X_{S_t}$. The process $(X^\phi_t)_{t\geq 0}$ is again a Markov process, and it is not hard to see that the subordinate semigroup is given by the Bochner integral
\begin{equation}\label{sur2w}
    P_t^\phi f:=\int_{[0,\infty)}P_sf\,\mu_t(\d s),\quad
    t\geq 0,\; f\text{ bounded, measurable}.
\end{equation}

The formula \eqref{sur2w} makes sense for any Markov semigroup $(P_t)_{t\geq 0}$ on any Banach space $E$ and defines
again a Markov semigroup. We refer to \cite{SSV} for details, in particular for a functional calculus for the generator of $(P_t^\phi)_{t\geq0}$. If $(S_t)_{t\geq0}$ is an $\alpha$-stable subordinator ($0<\alpha<1$), the dimension-free Harnack type inequalities in the sense of \cite{Wan97} were established in \cite{GRW}, see \cite{Wbook} for more details on such Harnack inequalities. For example, if $(P_t)_{t\geq0}$ satisfies the log-Harnack inequality
$$
    P_t\log f(x)\leq\log P_tf(y)+\Phi(t,x,y),
    \quad x,y\in E,\, t>0,\, 1\leq f\in\Bscr_b(E)
$$
for some function $\Phi:(0,\infty)\times E\times E \to [0,\infty)$, then a similar inequality holds for the subordinate semigroup $(P_t^\phi)_{t\geq0}$; that is, the log-Harnack inequality is preserved under subordination.
For the stability
of the power-Harnack inequality, we need an additional condition on
$\alpha$: if the following power-Harnack inequality holds
$$
    \big(P_tf(x)\big)^p\leq\big(P_tf^p(y)\big)
    \exp\left[\Phi(t,p,x,y)\right],\quad x,y\in E, \,t>0,
    \, 0\leq f\in\Bscr_b(E),
$$
where $p>1$ and $\Phi(\cdot,p,x,y):(0,\infty)\to[0,\infty)$
is a measurable function such that for some $\kappa>0$
$$
    \Phi(t,p,x,y)=O(t^{-\kappa})\quad \text{as $t\to0$},
$$
then $(P_t^\phi)_{t\geq0}$ satisfies also a power-Harnack inequality provided that
$\alpha\in\left(\kappa/(1+\kappa),1\right)$,
see \cite[Theorem 1.1]{GRW}. We stress that the results of \cite{GRW}
hold for any subordinator whose Bernstein function
satisfies $\phi(u)\geq Cu^\alpha$ for large values of $u$ with
some constant $C>0$ and $\alpha\in(0,1)$
(as before, $\alpha\in\left(\kappa/(1+\kappa),1\right)$
is needed for the power-Harnack inequality),
see \cite[Proof of Corollary 2.2]{WW14}.

Recently, new types of Harnack inequalities, called shift Harnack inequalities, have been proposed in \cite{Wan14}: A Markov semigroup $(P_t)_{t\geq 0}$ satisfies the shift log-Harnack inequality if
\begin{equation}\label{shift-log-Harnack}
    P_t\log f(x)
    \leq \log P_t[f(\cdot+e)](x)+ \Psi(t,e),\quad  t>0,\, 1\leq f\in\Bscr_b(E),
\end{equation}
and the shift power-Harnack inequality with power $p>1$ if
\begin{equation}\label{shift-power-Harnack}
    \big(P_tf(x)\big)^p
    \leq \big(P_t[f^p(\cdot+e)](x)\big)
    \exp[\Phi(t,p,e)],\quad t>0,\, 0\leq f\in\Bscr_b(E);
\end{equation}
here, $x,e\in E$ and $\Psi(\cdot,e), \Phi(\cdot,p,e):(0,\infty) \to [0,\infty)$ are measurable functions. These new Harnack type inequalities can be applied to heat
kernel estimates and quasi-invariance properties
of the underlying transition probability under shifts,
see \cite{Wan14, Wbook} for details.

The power-Harnack and log-Harnack inequalities were established in \cite{WW14} for a class of stochastic differential equations driven by additive noise containing a subordinate Brownian motion as additive component, see also \cite{Den14} for an improvement and \cite{WZ15} for further developments. The shift power-Harnack inequality was also derived in \cite{Wan13} for this model by using the Malliavin's calculus and finite-jump approximations. In particular, when the noise is an $\alpha$-stable process with $\alpha\in(1,2)$, an explicit shift power-Harnack inequality was presented, cf.\ \cite[Corollary 1.4\,(3)]{Wan13}. However, shift log-Harnack inequalities for stochastic differential equations driven by jump processes have not yet been studied.

We want to consider the stability of shift Harnack inequalities under subordination. In many specific cases, see e.g.\ Example \ref{ggv6} in Section~\ref{sec2} below, $\Psi(s,e)$ and $\Phi(s,p,e)$ are of the form $C_1s^{-\kappa_1}+C_2s^{\kappa_2}+C_3$, with constants $C_i\geq0$, $i=1,2,3$, depending only on $e\in E$ and $p>1$, and exponents $\kappa_1, \kappa_2>0$. As it turns out, this means that we have to control $\E S_t^{\kappa}$, $\kappa\in\R\setminus\{0\}$, for the shift log-Harnack inequality and $\E\,\e^{\lambda S_t^\kappa}$, $\lambda>0$ and $\kappa\in\R\setminus\{0\}$, for the shift power-Harnack inequality. Throughout the paper, we use the convention that $\frac10:=\infty$.

Since moment estimates for stochastic processes are interesting on their own, we study such (exponential) moment estimates first for general L\'evy processes, and then for subordinators. For real-valued L\'evy processes without Brownian component estimates for the $p$th ($p>0$) moment were investigated in \cite{Mil71} and \cite{LP08} via the Blumenthal--Getoor index introduced in \cite{BG61}. While the focus of these papers were short-time asymptotics, we need estimates also for $t\gg 1$ which require a different set of indices for the underlying processes.

Let us briefly indicate how the paper is organized. In Section~\ref{sec2} we establish the shift Harnack type inequalities for the subordinate semigroup $P_t^\phi$ from the corresponding inequalities for $P_t$. Some practical conditions are presented to ensure the stability of the shift Harnack inequality under subordination; in Example \ref{ggv6} we illustrate our results using a class of stochastic differential equations. Section~\ref{sec3} is devoted to moment estimates of L\'evy processes: Subsection~\ref{sec31} contains, under various conditions, several concrete (non-)existence results and estimates for moments, while Subsection~\ref{sec32} provides the estimates for $\E S_t^\kappa$ and $\E\,\e^{\lambda S_t^\kappa}$ which were used in Section~\ref{sec2}. Finally, we show in Section~4 that the index $\beta_0$ of a L\'{e}vy symbol at the origin satisfies $\beta_0\in[0,2]$. As usual, we indicate by subscripts that a positive constant $C = C_{\alpha, \beta, \gamma, \dots}$ depends on the parameters $\alpha, \beta, \gamma, \dots$

\section{Shift Harnack inequalities for subordinate semigroups}\label{sec2}

In this section, we use the moment estimates
for subordinators from Section~3 to establish shift
Harnack inequalities for subordinate semigroups. Let $(P_t)_{t\geq0}$ be a Markov semigroup on a Banach space $E$ and $S=(S_t)_{t\geq0}$ be a subordinator whose characteristic (Laplace) exponent $\phi$ is the Bernstein function given by \eqref{bern}. Recall that the subordinate semigroup $(P^\phi_t)_{t\geq0}$ is defined by \eqref{sur2w}.

First, we introduce two indices for subordinators:
\begin{equation}\label{be0}
    \sigma_0
    :=\sup\left\{\alpha:\, \lim_{u\downarrow0}\frac{\phi(u)}{u^\alpha} =0\right\},
\end{equation}
\begin{equation}\label{be1}
    \rho_\infty
    :=\sup\left\{\alpha:\,\liminf_{u\to\infty} \frac{\phi(u)}{u^\alpha}>0\right\}
    =\inf\left\{\alpha:\,\liminf_{u\to\infty} \frac{\phi(u)}{u^\alpha}=0\right\}.
\end{equation}
Since
\begin{equation}\label{hg321cf}
    \lim_{u\downarrow0}\frac{\phi(u)}{u}
    =\lim_{u\downarrow0}\phi'(u)
    =b+\lim_{u\downarrow0}\int_{(0,\infty)}x\e^{-ux}\,\nu(\d x)
    =b+\int_{(0,\infty)} x\,\nu(\d x)\in(0,\infty],
\end{equation}
it is clear that $\sigma_0\in[0,1]$. Moreover, the following formula holds, see \cite{DS14}:
\begin{equation}\label{jn6gvc}
    \sigma_0=\sup\left\{\alpha:\,
    \limsup_{u\downarrow0}\frac{\phi(u)}{u^\alpha}
    <\infty\right\}
    =\sup\left\{\alpha\leq1:\,\int_{y\geq1}
    y^\alpha\,\nu(\d y)<\infty\right\}.
\end{equation}
For any $\epsilon>0$, noting that
$$
    0\leq\frac{\phi(u)}{u^{1+\epsilon}}\leq\frac{b}{u^\epsilon}
    +\frac{1}{u^\epsilon}\int_{(0,1)}x\,\nu(\d x)
    +\frac{1}{u^{1+\epsilon}}\,\nu(x\geq1),\quad u>0,
$$
yields
$$
    \lim_{u\to\infty}\frac{\phi(u)}{u^{1+\epsilon}}
    =0,
$$
one has $\rho_\infty\leq1+\epsilon$. Since $\epsilon>0$ is arbitrary,
we conclude that $\rho_\infty\in[0,1]$.

\begin{remark}\label{equico}
    We will frequently use the condition $\liminf_{u\to\infty}\phi(u)u^{-\rho}>0$ for some $\rho\in(c,\rho_\infty]$, where $c\geq0$. This is clearly equivalent to either $c<\rho<\rho_\infty$ or $\rho=\rho_\infty>c$ and $\liminf_{u\to\infty}\phi(u)u^{-\rho_\infty}>0$.
\end{remark}

Assume that $P_t$ satisfies the following shift log-Harnack inequality
\begin{equation}\label{2him}
    P_t\log f(x)
    \leq\log P_t[f(\cdot+e)](x)+\frac{C_1(e)}{t^{\kappa_1}}
    +C_2(e)\,t^{\kappa_2}+C_3(e),\quad t>0,\,1\leq f\in\Bscr_b(E),
\end{equation}
for some $x,e\in E$; here $\kappa_1>0$, $\kappa_2\in(0,1]$, and $C_i(e)\geq0$, $i=1,2,3$, are constants depending only on $e$.
We are going to show that the subordinate semigroup $P_t^\phi$ satisfies a similar shift log-Harnack inequality. The following assumptions on the subordinator will be important:
\begin{enumerate}
    \item[\textbf{(H1)}]
        $\kappa_1>0$ and $\liminf_{u\to\infty}\phi(u)u^{-\rho}>0$ for some $\rho\in(0,\rho_\infty]$.\footnote{This is equivalent to
    either $0<\rho<\rho_\infty$ or
    $\rho=\rho_\infty>0$ and $\liminf_{u\to\infty} \phi(u)u^{-\rho_\infty}>0$,
    see Remark \ref{equico}.}
    \item[\textbf{(H2)}]
        $\kappa_2\in(0,1]$ satisfies $\int_{y\geq1}y^{\kappa_2}\,\nu(\d y)<\infty$.
    \item[\textbf{(H3)}]
        $\kappa_2\in(0,1]$ satisfies $\inf_{\theta\in[\kappa_2,1]}\int_{y>0}y^\theta\,\nu(\d y)<\infty$.
    \item[\textbf{(H4)}]
        $\kappa_2\in(0,\sigma)$, where
        $\sigma\in(0,\sigma_0]$ satisfies $\limsup_{u\downarrow0}\phi(u)
        u^{-\sigma}<\infty$.\footnote{In
        analogy to Remark \ref{equico}, this is
        equivalent to either
        $0<\sigma<\sigma_0$ or
        $\sigma=\sigma_0>0$ and $\limsup_{u\downarrow0}\phi(u)u^{-\sigma_0}<\infty$.}
\end{enumerate}
Note that by \eqref{jn6gvc}, \textbf{\upshape(H4)} is strictly
stronger than \textbf{\upshape(H2)}.

\begin{theorem}\label{Har1}
    Suppose that $P_t$ satisfies \eqref{2him}
    for some $x,e\in E$. In each of the following cases the subordinate semigroup $P_t^\phi$ satisfies also a shift log-Harnack inequality
    \begin{equation}\label{subo-shift}
        P_t^\phi\log f(x) \leq \log P_t^\phi [f(\cdot+e)](x) + \Psi(t,e),\quad t>0,\,1\leq f\in\Bscr_b(E).
    \end{equation}

    \medskip\noindent\textup{\bfseries a)}
    Assume \textbf{\upshape(H1)} and
    \textbf{\upshape(H2)}.
    Then \eqref{subo-shift} holds with $\Psi(t,e)$ of the form
    $$
    C_1(e)\frac{C_{\kappa_1,\rho}}{(t\wedge1)^{\kappa_1/\rho}}
    +C_2(e)C_{\kappa_2}(t\vee1)+C_3(e).
    $$

    \medskip\noindent\textup{\bfseries b)}
    Assume \textbf{\upshape(H1)} and
    \textbf{\upshape(H3)}.
    Then \eqref{subo-shift} holds with $\Psi(t,e)$ of the form
    $$
    C_1(e)\frac{C_{\kappa_1,\rho}}{(t\wedge1)^{\kappa_1/\rho}}
    +C_2(e)C_{\kappa_2}(t\vee1)^{\kappa_2}+C_3(e).
    $$

    \medskip\noindent\textup{\bfseries c)}
    Assume \textbf{\upshape(H1)} and
    \textbf{\upshape(H4)}.
    Then \eqref{subo-shift} holds with $\Psi(t,e)$ of the form
    \begin{gather*}
        C_1(e)\frac{C_{\kappa_1,\rho}}{(t\wedge1)^{\kappa_1/\rho}}
        +C_2(e)C_{\kappa_2,\sigma}(t\vee1)^{\kappa_2/\sigma}
        +C_3(e).
    \end{gather*}
\end{theorem}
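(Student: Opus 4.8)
The plan is to apply the shift log-Harnack inequality \eqref{2him} for $P_t$ pointwise, integrate against the subordinator law $\mu_t$, and then use Jensen's inequality to recover a clean logarithmic inequality for $P_t^\phi$. Concretely, for fixed $t>0$ and $1\leq f\in\Bscr_b(E)$, write $g:=f(\cdot+e)$ and apply \eqref{2him} at time $s$ to get $P_s\log f(x)\leq \log P_s g(x) + C_1(e)s^{-\kappa_1} + C_2(e)s^{\kappa_2} + C_3(e)$ for every $s>0$. Integrating both sides in $s$ against $\mu_t(\d s)$ and using that, by definition \eqref{sur2w}, $P_t^\phi\log f(x)=\int_{[0,\infty)}P_s\log f(x)\,\mu_t(\d s)$, the left side becomes $P_t^\phi\log f(x)$. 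On the right, the term $\int \log P_s g(x)\,\mu_t(\d s)$ is bounded above, via Jensen's inequality applied to the concave function $\log$ and the probability measure $\mu_t$, by $\log\int P_s g(x)\,\mu_t(\d s)=\log P_t^\phi g(x)=\log P_t^\phi[f(\cdot+e)](x)$. This already yields \eqref{subo-shift} with
\[
    \Psi(t,e) = C_1(e)\int_{[0,\infty)}s^{-\kappa_1}\,\mu_t(\d s) + C_2(e)\int_{[0,\infty)}s^{\kappa_2}\,\mu_t(\d s) + C_3(e) = C_1(e)\,\E S_t^{-\kappa_1} + C_2(e)\,\E S_t^{\kappa_2} + C_3(e),
\]
so the whole theorem reduces to the three moment estimates $\E S_t^{-\kappa_1}\leq C_{\kappa_1,\rho}(t\wedge1)^{-\kappa_1/\rho}$ and the three different upper bounds on $\E S_t^{\kappa_2}$ claimed in parts a), b), c).

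For the negative moment $\E S_t^{-\kappa_1}$: the bound should be a consequence of the hypothesis \textbf{(H1)}, i.e.\ $\phi(u)\gtrsim u^{\rho}$ for large $u$. The standard device is the identity $\E S_t^{-\kappa_1} = \frac{1}{\Gamma(\kappa_1)}\int_0^\infty u^{\kappa_1-1}\E\,\e^{-uS_t}\,\d u = \frac{1}{\Gamma(\kappa_1)}\int_0^\infty u^{\kappa_1-1}\e^{-t\phi(u)}\,\d u$; splitting the integral at $u=1$ and using $\phi(u)\geq c\,u^\rho$ for $u\geq 1$ (together with $\phi\geq 0$, $\phi$ nondecreasing) gives, after the substitution $v=t^{1/\rho}u$, a bound of order $t^{-\kappa_1/\rho}$ for $0<t\leq 1$ and a bounded contribution for $t\geq 1$, i.e.\ the claimed $C_{\kappa_1,\rho}(t\wedge1)^{-\kappa_1/\rho}$. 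I expect this is established in Section~3 (Subsection~\ref{sec32}) and may simply be quoted.

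For the positive moment $\E S_t^{\kappa_2}$, the three cases correspond to three different moment estimates: under \textbf{(H2)} one gets linear growth $C_{\kappa_2}(t\vee1)$; under \textbf{(H3)} one gets $C_{\kappa_2}(t\vee1)^{\kappa_2}$; under \textbf{(H4)} one gets $C_{\kappa_2,\sigma}(t\vee1)^{\kappa_2/\sigma}$. These again come from Section~3. For orientation: when $\kappa_2\leq 1$ and $\int_{y\geq1}y^{\kappa_2}\nu(\d y)<\infty$, one controls $\E S_t^{\kappa_2}$ by subadditivity of $x\mapsto x^{\kappa_2}$ and the Lévy–Itô decomposition, yielding a bound linear in $t$ for $t\geq1$ and, using the convolution property plus concavity, a bound of order $t^{\kappa_2}$ for $t\leq1$ — this gives a).  Case b) uses the finiteness of some $\int y^\theta\nu(\d y)$ with $\theta\in[\kappa_2,1]$ to upgrade the large-time behavior to $(t\vee1)^{\kappa_2}$ via a scaling/interpolation argument.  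Case c) uses the regular-variation-type bound $\phi(u)\lesssim u^\sigma$ as $u\downarrow0$ from \textbf{(H4)} together with the identity $\E S_t^{\kappa_2}=\frac{\kappa_2}{\Gamma(1-\kappa_2)}\int_0^\infty u^{-\kappa_2-1}(1-\e^{-t\phi(u)})\,\d u$ (valid for $0<\kappa_2<1$), substituting $v=t^{1/\sigma}u$ to extract the $t^{\kappa_2/\sigma}$ factor.

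The genuinely new content of this theorem is the reduction (first paragraph), which is short and mechanical once one checks that Jensen's inequality is legitimately applicable — in particular that $P_s\log f(x)$ and $\log P_s g(x)$ are $\mu_t$-integrable (the latter is bounded below because $g\geq1$ forces $P_sg\geq1$, hence $\log P_sg\geq0$; the former is integrable because $\log f$ is bounded). The main obstacle, therefore, is not in this section at all but in proving the three moment estimates for $\E S_t^{\kappa_2}$ and the estimate for $\E S_t^{-\kappa_1}$ with the stated explicit $t$-dependence and with constants depending only on the indicated parameters; those are exactly the results assembled in Section~3, which I would invoke here.
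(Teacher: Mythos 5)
Your proposal follows essentially the same route as the paper: apply \eqref{2him} at each time $s$, integrate against $\mu_t$, use Jensen's inequality for the concave function $\log$, and then invoke the moment bounds of Section~3 for $\E S_t^{-\kappa_1}$ and $\E S_t^{\kappa_2}$ in the three cases. The only detail the paper makes explicit and you gloss over is that \textbf{(H1)} forces $\phi(u)\to\infty$ as $u\to\infty$, hence $\mu_t(\{0\})=\P(S_t=0)=0$, so the integration can legitimately be restricted to $(0,\infty)$, where \eqref{2him} is actually assumed to hold.
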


\begin{proof}
    Because of \eqref{2him} the shift log-Harnack
    inequality \eqref{shift-log-Harnack} for $P_t$ holds with $\Psi(s,e)=C_1(e)s^{-\kappa_1}+C_2(e)s^{\kappa_2}+C_3(e)$. Note
    that \textbf{\upshape(H1)} implies
    $\phi(u)\to\infty$ as $u\to\infty$, hence excluding
    the compound Poisson subordinator, so
    $$
        \mu_t(\{0\})=\P(S_t=0)=0,\quad t>0.
    $$
    By Jensen's inequality we find for all $t>0$
    \begin{align*}
      P_t^\phi\log f(x)
      &=\int_{(0,\infty)}P_s\log f(x)\,\mu_t(\d s)\\
      &\leq\int_{(0,\infty)}\big[\log P_s[f(\cdot+e)](x) +\Psi(s,e)\big]\,\mu_t(\d s)\\
      &\leq\log\int_{(0,\infty)}P_s [f(\cdot+e)](x)\,\mu_t(\d s)+\int_{(0,\infty)}\Psi(s,e)\,\mu_t(\d s)\\
      &=\log P_t^\phi  [f(\cdot+e)](x)
      +\int_{(0,\infty)}\Psi(s,e)\,\mu_t(\d s).
    \end{align*}
    Therefore,
    $$
        P_t^\phi\log f(x)
        \leq\log P_t^\phi [f(\cdot+e)](x) + C_1(e)\E S_t^{-\kappa_1} + C_2(e)\E S_t^{\kappa_2} + C_3(e)
    $$
    and the desired estimates follow from the corresponding moment bounds in Section~3.
\end{proof}

Now we turn to the shift power-Harnack
inequality for $P_t^\phi$.
We will assume that for some $p>1$
and $x,e\in E$, $P_t$ satisfies the
following shift power-Harnack inequality
    \begin{align}\label{shift}
        \big(P_tf(x)\big)^p
        \leq \big(P_t [f^p(\cdot+e)](x)\big) &\exp\left[\frac{H_1(p,e)}{t^{\kappa_1}}
        +H_2(p,e)t^{\kappa_2}+H_3(p,e)\right],
        \\ \nonumber &\qquad\qquad\qquad\qquad\qquad
        t>0,\,0\leq f\in\Bscr_b(E),
    \end{align}
    where $\kappa_1>0$, $\kappa_2\in(0,1]$ and
    $H_i(p,e)\geq0$, $i=1,2,3$, are constants depending on $p$ and $e$.

\begin{theorem}\label{Har2}
    Assume that $P_t$ satisfies \eqref{shift}
    for some $p>1$ and $x,e\in E$,
    and $\liminf_{u\to\infty}\phi(u)u^{-\rho}>0$ for
    some $\rho\in\left(\kappa_1/(1+\kappa_1),\rho_\infty\right]$.\footnote{This
    is equivalent to either $\kappa_1/(1+\kappa_1)<\rho<\rho_\infty$
    or $\rho=\rho_\infty>\kappa_1/(1+\kappa_1)$ and
    $\liminf_{u\to\infty} \phi(u)u^{-\rho_\infty}>0$,
    see Remark \ref{equico}.} If
    $$
        \int_{y\geq1}\exp\left[\frac{rH_2(p,e)}{p-1} y^{\kappa_2}\right]\nu(\d y)<\infty
    $$
    holds for some $r>1$, then the subordinate
    semigroup $P_t^\phi$ satisfies the shift power-Harnack inequality \eqref{shift-power-Harnack} with an
    exponent $\Phi(t,p,e)$ given by
    \begin{equation}\label{hg66gh}
        \frac{C_{\kappa_1,\rho,p,r,e}}
                {(t\wedge1)^{\frac{\kappa_1}{\rho-
                (1-\rho)\kappa_1}}}
        +C_{\kappa_2,p,r,e}(t\vee1)
        +H_3(p,e).
    \end{equation}
\end{theorem}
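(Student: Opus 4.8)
The plan is to follow the proof of Theorem~\ref{Har1}, replacing Jensen's inequality by two successive applications of Hölder's inequality; the genuine content then reduces to the exponential moment bounds for $S_t$ established in Section~3. One preliminary observation: the assumption $\liminf_{u\to\infty}\phi(u)u^{-\rho}>0$ with $\rho>\kappa_1/(1+\kappa_1)>0$ forces $\phi(u)\to\infty$ as $u\to\infty$, so the subordinator is not a compound Poisson process and $\mu_t(\{0\})=\P(S_t=0)=0$ for every $t>0$; hence $S_t^{-\kappa_1}<\infty$ almost surely and $P_t^\phi f(x)=\int_{(0,\infty)}P_sf(x)\,\mu_t(\d s)$, the integral being effectively over $(0,\infty)$.

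Fix $t>0$ and $0\le f\in\Bscr_b(E)$. I would first take the $p$-th root in \eqref{shift}: for every $s>0$,
\[
  P_sf(x)\le\bigl(P_s[f^p(\cdot+e)](x)\bigr)^{1/p}\exp\!\left[\frac1p\left(\frac{H_1(p,e)}{s^{\kappa_1}}+H_2(p,e)\,s^{\kappa_2}+H_3(p,e)\right)\right].
\]
Integrating against $\mu_t$ and applying Hölder's inequality with the conjugate exponents $p$ and $p/(p-1)$, one obtains
\[
  P_t^\phi f(x)\le\bigl(P_t^\phi[f^p(\cdot+e)](x)\bigr)^{1/p}\left(\int_{(0,\infty)}\exp\!\left[\frac{1}{p-1}\left(\frac{H_1(p,e)}{s^{\kappa_1}}+H_2(p,e)\,s^{\kappa_2}+H_3(p,e)\right)\right]\mu_t(\d s)\right)^{(p-1)/p},
\]
and raising this to the power $p$ shows that \eqref{shift-power-Harnack} holds for $P_t^\phi$ with
\[
  \Phi(t,p,e)=H_3(p,e)+(p-1)\log\E\exp\!\left[\frac{H_1(p,e)}{p-1}\,S_t^{-\kappa_1}+\frac{H_2(p,e)}{p-1}\,S_t^{\kappa_2}\right].
\]

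The remaining task is to bound this expectation, and here a second Hölder step enters. With $r>1$ from the hypothesis and its conjugate $r'=r/(r-1)$,
\[
  \E\exp\!\left[\frac{H_1(p,e)}{p-1}S_t^{-\kappa_1}+\frac{H_2(p,e)}{p-1}S_t^{\kappa_2}\right]\le\left(\E\exp\!\left[\frac{r'H_1(p,e)}{p-1}S_t^{-\kappa_1}\right]\right)^{1/r'}\left(\E\exp\!\left[\frac{rH_2(p,e)}{p-1}S_t^{\kappa_2}\right]\right)^{1/r}.
\]
For the first factor I would invoke the exponential negative-moment estimate of Section~3: under $\liminf_{u\to\infty}\phi(u)u^{-\rho}>0$ with $\rho\in(\kappa_1/(1+\kappa_1),\rho_\infty]$ it is dominated by $\exp\bigl[C_{\kappa_1,\rho,p,r,e}\,(t\wedge1)^{-\kappa_1/(\rho-(1-\rho)\kappa_1)}\bigr]$, the exponent being finite and positive precisely because $\rho-(1-\rho)\kappa_1=\rho(1+\kappa_1)-\kappa_1>0$. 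For the second factor I would invoke the exponential positive-moment estimate of Section~3: the hypothesis $\int_{y\ge1}\exp[\frac{rH_2(p,e)}{p-1}y^{\kappa_2}]\,\nu(\d y)<\infty$ is exactly the L\'evy-measure condition ensuring $\E\exp[\frac{rH_2(p,e)}{p-1}S_t^{\kappa_2}]\le\exp[C_{\kappa_2,p,r,e}(t\vee1)]$. Taking logarithms, multiplying by $p-1$, adding $H_3(p,e)$ and absorbing all multiplicative constants yields the exponent \eqref{hg66gh}.

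I expect the main (and rather modest) obstacle to be the bookkeeping in this last step rather than any deep point: in the second Hölder estimate one must assign the factor $r$ to the positive power $S_t^{\kappa_2}$ (so that the stated L\'evy-measure condition is the one that gets used) and the conjugate factor $r'$ to the negative power $S_t^{-\kappa_1}$, and one must check that the restriction $\rho>\kappa_1/(1+\kappa_1)$ is exactly what keeps the small-time exponent $\kappa_1/(\rho-(1-\rho)\kappa_1)$ finite. The substantive input --- the two exponential moment estimates for subordinators --- is supplied by Section~3 and is used here as a black box, just as in the proof of Theorem~\ref{Har1}.
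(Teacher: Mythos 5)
Your proposal is correct and follows essentially the same route as the paper's own proof: the first H\"older step (exponents $p$ and $p/(p-1)$) reduces everything to $\E\exp\bigl[\tfrac{H_1(p,e)}{p-1}S_t^{-\kappa_1}+\tfrac{H_2(p,e)}{p-1}S_t^{\kappa_2}\bigr]$, and the second H\"older step with $r$ on the $S_t^{\kappa_2}$ term and $r'=r/(r-1)$ on the $S_t^{-\kappa_1}$ term, combined with Theorem \ref{hyv3}\,b) and Theorem \ref{hygvfj}\,a), yields exactly the exponent \eqref{hg66gh}. No gaps.
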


\begin{proof}
    As in the proof of Theorem \ref{Har1},
    we see $\mu_t(\{0\})=0$ for any $t>0$.
    By \eqref{shift} and the H\"{o}lder inequality one has
    \begin{align*}
        &\big(P_t^\phi f(x)\big)^p
        =\left(\int_{(0,\infty)}P_sf(x)\,\mu_t(\d s)\right)^p\\
        &\leq \left(\int_{(0,\infty)}\big(P_s[f^p(\cdot+e)](x)\big)^{\frac1p} \exp\left[\frac{H_1(p,e)}{ps^{\kappa_1}}+
        \frac{H_2(p,e)}{p}s^{\kappa_2}+\frac{H_3(p,e)}{p}\right] \mu_t(\d s)\right)^p\\
        &\leq\left(\int_{(0,\infty)}P_s[f^p(\cdot+e)](x)\,\mu_t(\d s)\right)\\
        &\qquad\mbox{}\times\left(\int_{(0,\infty)}
            \exp\left[\frac{H_1(p,e)}{(p-1)s^{\kappa_1}}+\frac{H_2(p,e)}{p-1}s^{\kappa_2}+\frac{H_3(p,e)}{p-1}\right] \mu_t(\d s)\right)^{p-1}\\
        &=\big(P_t^\phi[f^p(\cdot+e)](x)\big)\,\e^{H_3(p,e)}
        \left(\E\exp\left[\frac{H_1(p,e)}{p-1}S_t^{-\kappa_1}+\frac{H_2(p,e)}{p-1}S_t^{\kappa_2}\right]\right)^{p-1}\\
        &\leq\big(P_t^\phi[f^p(\cdot+e)](x)\big)\,\e^{H_3(p,e)}\\
        &\qquad\mbox{}\times\left(\E\exp\left[\frac{rH_1(p,e)}{(r-1)(p-1)}S_t^{-\kappa_1}\right]\right)^{\frac{(r-1)(p-1)}{r}}
        \left(\E\exp\left[\frac{rH_2(p,e)}{p-1}S_t^{\kappa_2}\right]\right)^{\frac{p-1}{r}}.
    \end{align*}
    By Theorem \ref{hyv3}\,b) in Section~\ref{sec3},
    $$
        \left(\E\exp\left[\frac{rH_1(p,e)}{(r-1)(p-1)}S_t^{-\kappa_1}\right]\right)^{\frac{(r-1)(p-1)}{r}}
        \leq\exp\left[
        \frac{C_{\kappa_1,\rho,p,r,e}}
                {(t\wedge1)^{\frac{\kappa_1}{\rho-
                (1-\rho)\kappa_1}}}
        \right].
    $$
    On the other hand, it follows
    from Theorem \ref{hygvfj}\,a) below that
    $$
        \left(\E\exp\left[\frac{rH_2(p,e)}{p-1}S_t^{\kappa_2}\right]\right)^{\frac{p-1}{r}}
        \leq
        \exp\big[C_{\kappa_2,p,r,e}(t\vee1)\big].
    $$
    Combining the above estimates finishes the proof.
\end{proof}

\begin{remark}
  By Theorem \ref{hyv3}\,b) below, in \eqref{hg66gh}
    $C_{\kappa_1,\rho,p,r,e}$ can be explicitly
    given as a function of $p,r,e$. In order to
    write $C_{\kappa_2,p,r,e}$
    in \eqref{hg66gh} explicitly as a function
    of $p,r,e$, we need some
    additional condition.
    If $\int_{(0,1)}|y|^{\kappa_2}\,\nu(\d y)<\infty$, this
    can be easily realized by using Theorem \ref{hygvfj}\,b) below. If
    $\nu(y\geq1)=0$, then we
    can use \cite[Theorem 25.17]{Sato}: for any $\lambda>0$, $\kappa_2\in(0,1]$ and $t>0$
    $$
        \E\,\e^{\lambda S_t^{\kappa_2}}\leq
        \E\,\e^{\lambda(1\vee S_t)}\leq
        \e^\lambda\,\E\,\e^{\lambda S_t}
        =\exp\left[\lambda+
        t\left(b\lambda+\int_{(0,1)}
        \left(\e^{\lambda y}-1\right)\,\nu(\d y)
        \right)
        \right].
    $$
\end{remark}

\begin{example}\label{ggv6}
    Consider the following stochastic differential equation on $\R^d$:
    \begin{equation}\label{sto}
        \d X_t
        = l_t(X_t)\,\d t+\Sigma_t\,\d W_t,
    \end{equation}
    where $l:[0,\infty)\times\R^d\to\R^d$
    and $\Sigma:[0,\infty)\to\R^d\times\R^d$ are measurable
    and locally bounded, and $(W_t)_{t\geq0}$ is a standard $d$-dimensional Brownian motion. We assume the following conditions on $l$ and $\Sigma$:
    \begin{enumerate}
        \item[\textbf{(A1)}]
            There exists a locally bounded measurable function $K:[0,\infty)\to[0,\infty)$ such that
            $$
                |l_t(x)-l_t(y)|\leq K_t|x-y|,
                \quad x,y\in\R^d,\;t\geq0.
            $$

        \item[\textbf{(A2)}]
            For each $t\geq0$, the matrix $\Sigma_t$ is invertible and there exists a measurable function
            $\gamma:[0,\infty)\to(0,\infty)$ such that $\gamma\in L^2_{\text{loc}}([0,\infty))$ and $\left\|\Sigma_t^{-1}\right\|\leq\gamma_t$ for all $t\geq0$.
    \end{enumerate}
    It is well known that \textbf{(A1)} ensures that \eqref{sto} has for each starting point $X_0=x\in\R^d$ a unique solution $(X_t^x)_{t\geq 0}$ with infinite life-time. By $P_t$ we denote the associated Markov semigroup, i.e.
    $$
        P_tf(x)=\E f(X_t^x),
        \quad t\geq0,\; f\in\Bscr_b(\R^d),\; x\in\R^d.
    $$

    Assume that for
    some $\kappa_1\geq1$ and $\kappa_2\in(0,1]$
    \begin{align}\label{con4}
        \limsup_{t\to 0}\frac{1}{t^{2-\kappa_1}}\int_0^t
        \gamma_r^2(rK_r+1)^2\,\d r
        &<\infty,\\
    \label{con5}
        \limsup_{t\to\infty}\frac{1}{t^{2+\kappa_2}}\int_0^t
        \gamma_r^2(rK_r+1)^2\,\d r
        &<\infty.
    \end{align}
    Typical examples for $K$ and $\gamma$ satisfying \eqref{con4} and \eqref{con5} are
    \begin{itemize}
        \item
            $\gamma_s=1$ and $K_s=s^\theta\wedge1$
            for $\theta\leq0$. Then it is easy to see
            that \eqref{con4} is fulfilled with $\kappa_1\geq1$
            and \eqref{con5} is satisfied with
            $\kappa_2\in[(2\theta+1)\vee0,1]\setminus\{0\}$.
        \item
            $\gamma_s=\I_{\{0\}}(s)
            +s^\theta\I_{(0,\infty)}(s)$ for
            $\theta\in(-1/2,0]$ and $K_s=1\vee(\log s)$.
            Then \eqref{con4} holds with
            $\kappa_1\geq1-2\theta$ and
            \eqref{con5} holds with $\kappa_2\in(1+2\theta,1]$.
    \end{itemize}

    We are going to show that there exists a constant $C>0$ such that for all $t>0$ and $x,e\in\R^d$ the following shift log- and power-$(p>1)$-Harnack inequalities hold:
    \begin{gather*}
        P_t\log f(x)\leq\log P_t[f(\cdot+e)](x)
        +C|e|^2\left(\frac{1}{t^{\kappa_1}}+t^{\kappa_2}+1\right),
        \quad 1\leq f\in\Bscr_b(\R^d),
    \\
        \big(P_t f(x)\big)^p\leq \big(P_t[f^p(\cdot+e)](x)\big)
        \exp\left[\frac{Cp |e|^2}{p-1}\left(\frac{1}{t^{\kappa_1}}+t^{\kappa_2}+1\right)\right],
        \quad 0\leq f\in\Bscr_b(\R^d).
    \end{gather*}
    In particular, Theorems \ref{Har1} and \ref{Har2} can be applied.

    Although the proof of Example \ref{ggv6} relies on known arguments, see e.g.~\cite{Wan14, Wbook}, we include the complete proof for the convenience of the reader.

\begin{proof}[Proof of Example \ref{ggv6}]
    Fix $t>0$ and $x,e\in\R^d$. We adopt the new coupling argument from \cite{Wan14} (see also \cite{Wbook}) to construct another process $(Y_s^x)_{s\geq0}$ also starting from $x$ such that $Y_t^x-X_t^x=e$ at the fixed time $t$. The process $(Y_s^x)_{s\geq0}$ is the solution of the following equation
    \begin{equation}\label{po8b}
        \d Y_s^x
        =l_s(X_s^x)\,\d s + \Sigma_s\,\d W_s + \frac{e}{t}\,\d s,
        \quad Y_0^x=x.
    \end{equation}
    Clearly,
    \begin{equation}\label{hg0v}
        Y_r^x - X_r^x
        = \int_0^r\frac{e}{t}\,\d s
        = \frac{r}{t}\,e,
        \quad 0\leq r\leq t,
    \end{equation}
    and, in particular, $Y_t^x-X_t^x=e$. Rewrite \eqref{po8b} as
    $$
        \d Y_s^x
        = l_s(Y_s^x)\,\d s + \Sigma_s\,\d \widetilde{W}_s,
        \quad Y_0^x=x,
    $$
    where
    $$
        \widetilde{W}_s
        :=W_s+\int_0^s\Sigma_r^{-1}\left(l_r(X_r^x)-l_r(Y_r^x)+\frac{e}{t}\right)\d r,
        \quad 0\leq s\leq t.
    $$
    Let
    $$
        M_s
        :=\int_0^s\Sigma_r^{-1}\left(l_r(X_r^x)
        -l_r(Y_r^x)+\frac{e}{t}\right)\d W_r,\quad 0\leq s\leq t.
    $$
    Since it follows from \textbf{(A2)}, \textbf{(A1)} and \eqref{hg0v} that
    \begin{align*}
        \left|\Sigma_r^{-1}\left(l_r(X_r^x)-l_r(Y_r^x)+\frac{e}{t}\right)\right|
        &\leq\gamma_r\left|l_r(X_r^x)-l_r(Y_r^x)+\frac{e}{t}\right|\\
        &\leq\gamma_r\left(K_r\left|X_r^x-Y_r^x\right|+\frac{|e|}{t}\right)\\
        &=\frac{|e|}{t}\gamma_r(rK_r+1),\quad 0\leq r\leq t,
    \end{align*}
    the compensator of the martingale $M$ satisfies
    \begin{equation}\label{h8mc}
        \langle M\rangle _t
        =\int_0^t\left|\Sigma_r^{-1}\left(l_r(X_r^x)
        -l_r(Y_r^x)+\frac{e}{t}\right)\right|^2\d r
        \leq\frac{|e|^2}{t^2}\int_0^t\gamma_r^2(rK_r+1)^2\,\d r.
    \end{equation}
    Set
    $$
        R_t:=\exp\left[-M_t-\frac12\langle M\rangle _t\right].
    $$
    Novikov's criterion shows that $\E R_t=1$. By the Girsanov theorem, $(\widetilde{W}_s)_{0\leq s\leq t}$ is a $d$-dimensional Brownian motion under the probability measure $R_t\P$.

    To derive the shift log-Harnack inequality for $P_t$, we first note that \eqref{h8mc} implies
    \begin{align*}
        \log R_t&=-M_t-\frac12\langle M\rangle _t\\
        &=-\int_0^t\Sigma_r^{-1}\left(l_r(X_r^x)-l_r(Y_r^x)+\frac{e}{t}\right)\d \widetilde{W}_r + \frac12\langle M\rangle _t\\
        &\leq-\int_0^t \Sigma_r^{-1}\left(l_r(X_r^x)-l_r(Y_r^x)+\frac{e}{t}\right)\d \widetilde{W}_r + \frac{|e|^2}{2t^2}\int_0^t \gamma_r^2(rK_r+1)^2\,\d r.
    \end{align*}
    Since $\E R_t=1$, we find with the Jensen inequality for any random variable $F\geq 1$
    $$
        \int R_t\log\frac{F}{R_t}\,\d\P \leq \log\int F\,\d\P,
        \quad\text{hence,}\quad
        \int R_t\log F\,\d\P \leq \log\int F\,\d\P + \int R_t\log R_t\,\d\P.
    $$
    Thus, we get for any $f\in\Bscr_b(\R^d)$ with $f\geq 1$
    \begin{equation}\label{hf3z}
    \begin{aligned}
        P_t\log f(x)&=\E_{R_t\P}\log f(Y_t^x)\\
        &=\E\big[R_t\log f(X_t^x+e)\big]\\
        &\leq\log\E f(X_t^x+e)+\E[R_t\log R_t]\\
        &=\log P_t[f(\cdot+e)](x)+\E_{R_t\P}\log R_t\\
        &\leq\log P_t[f(\cdot+e)](x)+
        \frac{|e|^2}{2t^2}\int_0^t
        \gamma_r^2(rK_r+1)^2\,\d r.
    \end{aligned}
    \end{equation}

    On the other hand, for any $p>1$ and $f\in\Bscr_b(\R^d)$ with $f\geq0$, we deduce with the H\"{o}lder inequality that
    \begin{align*}
        \big(P_tf(x)\big)^p&=\big(\E_{R_t\P}f(Y_t^x)\big)^p\\
        &=\big(\E[R_tf(X_t^x+e)]\big)^p\\
        &\leq\big(\E f^p(X_t^x+e)\big)
        \left(\E R_t^{\frac{p}{p-1}}\right)^{p-1}\\
        &=\big(P_t[f^p(\cdot+e)](x)\big)
        \left(\E R_t^{\frac{p}{p-1}}\right)^{p-1}.
    \end{align*}
    Because of \eqref{h8mc}, it follows that
    \begin{align*}
        \E R_t^{\frac{p}{p-1}}
        &=\E\exp\left[\frac{p}{2(p-1)^2}\langle M\rangle _t-\frac{p}{p-1}M_t-\frac{p^2}{2(p-1)^2}\langle M\rangle _t\right]\\
        &\leq\exp\left[\frac{p}{2(p-1)^2}\cdot\frac{|e|^2}{t^2}
        \int_0^t\gamma_r^2(rK_r+1)^2\,\d r\right]
        \E\exp\left[-\frac{p}{p-1}M_t-\frac{p^2}{2(p-1)^2}\langle M\rangle _t\right]\\
        &=\exp\left[\frac{p|e|^2}{2(p-1)^2t^2}\int_0^t\gamma_r^2(rK_r+1)^2\,\d r\right].
    \end{align*}
    In the last step we have used the fact that $\exp\left[-\frac{p}{p-1}M_t-\frac{p^2}{2(p-1)^2}\langle M\rangle _t\right]$ is a martingale; this is due to \eqref{h8mc} and Novikov's criterion. Therefore,
    \begin{equation}\label{j6cv}
        \big(P_tf(x)\big)^p
        \leq
        \big(P_t[f^p(\cdot+e)](x)\big) \exp\left[\frac{p|e|^2}{2(p-1)t^2}\int_0^t\gamma_r^2(rK_r+1)^2\,\d r\right]
    \end{equation}
    holds for all $p>1$ and non-negative $f\in\Bscr_b(\R^d)$.

    Finally, it remains to observe that \eqref{con4} and \eqref{con5} imply that there exists a constant $C>0$ such that
    $$
        \frac{1}{t^2}\int_0^t \gamma_r^2(rK_r+1)^2\,\d r
        \leq
        2C\left(\frac{1}{t^{\kappa_1}}+t^{\kappa_2}+1\right)
        \quad\text{for all $t>0$}.
    $$
    Substituting this into \eqref{hf3z} and \eqref{j6cv}, respectively, we obtain the desired shift log- and power-Harnack inequalities.
\end{proof}
\end{example}

\section{Moment estimates for L\'evy processes}\label{sec3}

\subsection{General L\'evy processes}\label{sec31}

A L\'evy process $X=(X_t)_{t\geq0}$ is a $d$-dimensional stochastic process with stationary and independent increments and almost surely c\`adl\`ag (right-continuous with finite left limits) paths $t\mapsto X_t$. As usual, we assume that $X_0=0$. Our standard references are \cite{Sato, Ja01}. Since L\'evy processes are (strong) Markov processes, they are completely characterized by the law of $X_t$, hence by the characteristic function of $X_t$. It is well known that
$$
    \E\e^{\i\xi\cdot X_t}=\e^{-t\psi(\xi)},
    \quad t>0,\;\xi\in\R^d,
$$
where the characteristic
exponent (L\'{e}vy symbol) $\psi:\R^d\to\C$ is given by the L\'evy--Khintchine formula
$$
    \psi(\xi)
    =-\i\ell\cdot\xi + \frac12\xi\cdot Q\xi +\int_{y\neq0} \left(1-\e^{\i\xi\cdot y} + \i\xi\cdot y \I_{(0,1)}(|y|)\right)\nu(\d y),
$$
where $\ell\in\R^d$ is the drift coefficient, $Q$ is a non-negative semidefinite $d\times d$ matrix, and $\nu$ is the L\'evy measure
on $\R^d\setminus\{0\}$ satisfying $\int_{y\neq 0}(1\wedge|y|^2)\,\nu(\d y)<\infty$. The L\'evy triplet $(\ell,Q,\nu)$ uniquely determines $\psi$, hence $X$. The infinitesimal generator
of $X$ is given by
$$
    \mathscr{L}f
    = \ell\cdot\nabla f +\frac12 \nabla\cdot Q\nabla f
    +\int_{y\neq0} \left(f(y+\cdot)-f- y\cdot\nabla f \I_{(0,1)}(|y|)\right)\nu(\d y),
    \quad
    f\in C_b^2(\R^d).
$$

First, we consider $\E|X_t|^\kappa$ for
$\kappa\in\R\setminus\{0\}$.
Recall, cf.\ \cite[Theorem 25.3]{Sato}, that the L\'evy process $X$ has a $\kappa$th ($\kappa>0$) moment, i.e.\ $\E|X_t|^\kappa<\infty$
for some (hence, all) $t>0$, if and only if
\begin{equation}\label{b6c2}
    \int_{|y|\geq1}|y|^\kappa\,\nu(\d y)<\infty.
\end{equation}
To present our results, we also need to introduce four indices
for L\'{e}vy processes. Define the Blumenthal--Getoor index
$\beta_\infty$ of a L\'{e}vy process by
$$
    \beta_\infty:=
    \inf\left\{\alpha\geq0\,:\,
    \lim_{|\xi|\to\infty}\frac{|\psi(\xi)|}{|\xi|^\alpha}
    =0\right\}.
$$
Clearly, $\beta_\infty\in[0,2]$. For a L\'{e}vy process
without diffusion part
and dominating drift, this coincides
with Blumenthal--and--Getoor's original definition based
on the L\'{e}vy measure $\nu$, cf.\ \cite{BG61, Sch98}, i.e.
$$
    \beta_\infty=
    \inf\left\{\alpha\geq0\,:\,
    \int_{0<|y|<1}|y|^\alpha\,\nu(\d y)<\infty\right\}.
$$
Moreover, it is easy to see that
\begin{equation}\label{ght56jn}
    \beta_\infty=\inf\left\{\alpha\geq0\,:\,
    \limsup_{|\xi|\to\infty}\frac{|\psi(\xi)|}{|\xi|^\alpha}
    =0\right\}
    =\inf\left\{\alpha\geq0\,:\,
    \limsup_{|\xi|\to\infty}\frac{|\psi(\xi)|}{|\xi|^\alpha}
    <\infty\right\}.
\end{equation}
In Section~\ref{sec2} we have introduced the index $\sigma_0$ for subordinators using the characteristic Laplace exponent (Bernstein function) $\phi$. A similar index exists for a general L\'evy process $X$---it is the counterpart at the origin of the classical Blumenthal--Getoor index---but its definition is based on the characteristic exponent $\psi$:
\begin{equation}\label{bet}
    \beta_0
    :=\sup\left\{\alpha\geq 0\,:\,\lim_{|\xi|\to 0}\frac{|\psi(\xi)|}{|\xi|^\alpha}=0\right\}.
\end{equation}
It holds that $\beta_0\in[0,2]$, see the
appendix (Section~4)
for an easy proof. As in \eqref{ght56jn}, one has
\begin{equation*}
    \beta_0
    =\sup\left\{\alpha\geq 0\,:\,\limsup_{|\xi|\to 0}\frac{|\psi(\xi)|}{|\xi|^\alpha}=0\right\}
    =\sup\left\{\alpha\geq 0\,:\,\limsup_{|\xi|\to 0}\frac{|\psi(\xi)|}{|\xi|^\alpha}<\infty\right\}.
\end{equation*}
Moreover, for a L\'{e}vy process without dominating drift,
it holds that, cf.\ \cite{Sch98},
\begin{equation*}
    \beta_0
    =\sup\left\{\alpha\leq 2\,:\, \int_{|y|\geq1}|y|^\alpha\,\nu(\d y)<\infty \right\}.
\end{equation*}
Let
$$
    \delta_\infty:=\sup\left\{\alpha\geq0\,:\,
    \liminf_{|\xi|\to\infty}\frac{\RE\psi(\xi)}
    {|\xi|^\alpha}>0
    \right\}
    =\inf\left\{\alpha\geq0\,:\,
    \liminf_{|\xi|\to\infty}\frac{\RE\psi(\xi)}
    {|\xi|^\alpha}=0
    \right\},
$$
$$
    \delta_0:=\inf\left\{\alpha\geq0\,:\,
    \liminf_{|\xi|\to0}\frac{\RE\psi(\xi)}
    {|\xi|^\alpha}>0
    \right\}
    =\sup\left\{\alpha\geq0\,:\,
    \liminf_{|\xi|\to0}\frac{\RE\psi(\xi)}
    {|\xi|^\alpha}=0
    \right\}.
$$
Clearly, $0\leq\delta_\infty\leq\beta_\infty\leq2$ and $\beta_0\leq\delta_0$. Note that $\delta_0\in[0,\infty]$.

\begin{theorem}\label{hb43}
    Let $X$ be a L\'evy process in $\R^d$ with L\'{e}vy
    triplet $(\ell,Q,\nu)$ and characteristic exponent
    $\psi$.

\medskip\noindent\textup{\bfseries a)}
    Assume that $Q=0$, $\kappa\in(0,1]$ and \eqref{b6c2} holds.
    Then for any $t>0$
     \begin{align*}
      \E|X_t|^\kappa
      &\leq |\ell|^\kappa t^\kappa +\left(\int_{|y|\geq1}|y|^\kappa\,\nu(\d y)\right)t\\
      &\qquad\mbox{}+2\left(\frac{d}{2}\kappa(3-\kappa)\int_{0<|y|<1}|y|^2\,\nu(\d y)\right)^{\kappa/2}
      \big[1+\nu(|y|\geq 1)\,t\big]^{1-\kappa/2} t^{\kappa/2}.
    \end{align*}
    In particular, $\E|X_t|^\kappa\leq C_\kappa\,t$ for any $t\geq1$.

\medskip\noindent\textup{\bfseries b)}
    Assume that $Q=0$, $\kappa\in(0,1]$ and
    $$
        \inf_{\theta\in[\kappa,1]}
        \int_{y\neq0}|y|^\theta\,\nu(\d y)<\infty.
    $$
   Then for any $t>0$
    \begin{equation}\label{ppb5c3}
        \E|X_t|^\kappa
        \leq
        \inf_{\theta\in[\kappa,1]} \left[|\hat{\ell}|^\theta t^\theta + \left(\int_{y\neq0}|y|^\theta\,\nu(\d y)\right)t \right]^{\kappa/\theta},
    \end{equation}
    where
    \begin{equation}\label{pi8h}
        \hat{\ell}:=\ell-\int_{0<|y|<1}y\,\nu(\d y).
    \end{equation}
    In particular, $\E|X_t|^\kappa\leq C_\kappa\,t^\kappa$ for
    any $t\geq1$.

\medskip\noindent\textup{\bfseries c)}
    Assume that $\kappa\in(0,\beta)$, where
    $\beta\in(0,\beta_0]$ satisfies
    $\limsup_{|\xi|\to0}
    |\psi(\xi)||\xi|^{-\beta}<\infty$.\footnote{This is equivalent to
    either $0<\beta<\beta_0$ or $\beta=\beta_0>0$ and
    $\limsup_{|\xi|\to0}
    |\psi(\xi)||\xi|^{-\beta_0}<\infty$.} Then for
    any $t\geq 1$
    $$
       \E|X_t|^\kappa\leq C_{\kappa,\beta,d}\,t^{\kappa/\beta}.
    $$
    If furthermore $\limsup_{|\xi|\to\infty}
        |\psi(\xi)||\xi|^{-\beta}<\infty$,\footnote{This is
        equivalent to
    either $\beta_\infty<\beta\leq\beta_0$ or
    $\beta=\beta_\infty\in(0,\beta_0]$ and $\limsup_{|\xi|\to\infty}
    |\psi(\xi)||\xi|^{-\beta_\infty}<\infty$.} then this
    estimate holds for all $t>0$.

\medskip\noindent\textup{\bfseries d)}
     Let $\kappa\in(0,d)$. If for
     some $\delta\in(0,\delta_\infty]$, $\liminf_{|\xi|\to\infty}
     \RE\psi(\xi)|\xi|^{-\delta}>0$,\footnote{This is
        equivalent to
    either $0<\delta<\delta_\infty$ or
    $\delta=\delta_\infty>0$ and
    $\liminf_{|\xi|\to\infty} \RE\psi(\xi)|\xi|^{-\delta_\infty}>0$.} then
    for any $t\in(0,1]$
     $$
         \E|X_t|^{-\kappa}
         \leq C_{\kappa,\delta,d}\,t^{-\kappa/\delta}.
     $$
     If furthermore $\liminf_{|\xi|\to0}
     \RE\psi(\xi)|\xi|^{-\delta}>0$,\footnote{This is
        equivalent to
    either $\delta_0<\delta\leq\delta_\infty$ or
    $\delta=\delta_0\in(0,\delta_\infty]$ and
    $\liminf_{|\xi|\to0} \RE\psi(\xi)|\xi|^{-\delta_0}>0$.}
    then this estimate holds for all $t>0$.

\medskip\noindent\textup{\bfseries e)}
    If $\psi$ is real-valued, then for any $\kappa\geq d$ and $t>0$
    $$
        \E|X_t|^{-\kappa}=\infty.
    $$

\end{theorem}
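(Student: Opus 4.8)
The plan is to exploit two consequences of $\psi$ being real-valued. Since $\RE\psi\geq0$ always holds, a real $\psi$ is automatically non-negative, so the characteristic function $\widehat\mu_t(\xi)=\e^{-t\psi(\xi)}$ is \emph{strictly positive}; moreover $\widehat\mu_t$ is then real-valued, i.e.\ $\mu_t:=\P(X_t\in\cdot)$ is symmetric. First I would reduce the statement to a lower bound for the small-ball probabilities of $X_t$. By the layer-cake formula, the identity $\{|X_t|^{-\kappa}>\lambda\}=\{|X_t|<\lambda^{-1/\kappa}\}$ for $\lambda>0$ (which automatically absorbs a possible atom of $\mu_t$ at the origin), and the substitution $r=\lambda^{-1/\kappa}$,
\[
    \E|X_t|^{-\kappa}=\int_0^\infty\P\big(|X_t|<\lambda^{-1/\kappa}\big)\,\d\lambda=\kappa\int_0^\infty r^{-\kappa-1}\,\P\big(|X_t|<r\big)\,\d r .
\]
Thus it suffices to produce a constant $C_t>0$ with $\P(|X_t|<r)\geq C_t\,r^d$ for all $r\in(0,1]$: the integral above then diverges at $r=0$ precisely because $\kappa\geq d$.

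For this small-ball estimate I would use the classical Fourier bound, obtained via a test function rather than via a density. Fix $r\in(0,1]$, put $\phi:=\I_{\{|x|<r/2\}}$ and $g:=\phi*\phi$, so that $g\geq0$, $\supp g\subseteq\{|x|<r\}$, $\|g\|_\infty=g(0)=\omega_d(r/2)^d$ with $\omega_d:=\Leb(\{|x|<1\})$, and $\widehat g=\widehat\phi^{\,2}\geq0$ with $\widehat\phi$ real. From $\I_{\{|x|<r\}}\geq g/g(0)$ together with Fourier inversion (legitimate since $\widehat g=\widehat\phi^{\,2}\in L^1$, because $\widehat\phi\in L^2$) and the positivity and realness of $\widehat\mu_t=\e^{-t\psi}$,
\[
    \P\big(|X_t|<r\big)\geq\frac1{g(0)}\int g\,\d\mu_t=\frac{c_d}{g(0)}\int_{\R^d}\widehat\phi(\xi)^2\,\e^{-t\psi(\xi)}\,\d\xi\geq\frac{c_d}{g(0)}\int_{|\xi|\leq2\delta_0}\widehat\phi(\xi)^2\,\e^{-t\psi(\xi)}\,\d\xi
\]
for a dimensional constant $c_d>0$. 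By the scaling identity $\widehat\phi(\xi)=(r/2)^d\,\widehat{\I_{\{|x|<1\}}}((r/2)\xi)$ and continuity of $\widehat{\I_{\{|x|<1\}}}$ at the origin, there is an absolute $\delta_0>0$ with $\widehat\phi(\xi)^2\geq\tfrac14\omega_d^2(r/2)^{2d}$ on $\{|\xi|\leq2\delta_0/r\}$, a set that contains $\{|\xi|\leq2\delta_0\}$ since $r\leq1$; plugging this in and using $g(0)=\omega_d(r/2)^d$ yields $\P(|X_t|<r)\geq C_t\,r^d$ with $C_t:=c_d'\int_{|\xi|\leq2\delta_0}\e^{-t\psi(\xi)}\,\d\xi>0$, which is finite and strictly positive because $\e^{-t\psi}\in(0,1]$.

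The step I expect to matter most is resisting the temptation to argue through a transition density: $\e^{-t\psi}$ need not be integrable---for instance if $\psi$ is bounded (a symmetric compound Poisson process, where $X_t$ even has an atom at $0$ and the claim is trivial) or grows only along some directions---so in general there is no bounded continuous $p_t$ to estimate near $0$; routing the argument through $g$ circumvents this. The other point worth underlining is that realness of $\psi$, i.e.\ symmetry of $\mu_t$, is genuinely needed: it is exactly what makes $\widehat g\,\overline{\widehat\mu_t}=\widehat\phi^{\,2}\e^{-t\psi}$ non-negative, so that discarding all frequencies outside a small ball can only decrease the Parseval integral. Without symmetry the conclusion can fail---for a subordinator $X_t$ stays away from the origin strongly enough that all negative moments may exist. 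The remaining points (Fubini/Fourier inversion for $g$ against the finite measure $\mu_t$, and the elementary scaling and continuity facts for $\widehat{\I_{\{|x|<1\}}}$) are routine.
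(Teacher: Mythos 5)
Your proposal only addresses part e) of the theorem; parts a)--d) are not touched at all, and they are the bulk of the statement. They also require genuinely different tools from anything in your argument: a) and b) rest on applying Dynkin's formula to the smoothed functions $\left(\epsilon+|x|^2\right)^{\kappa/2}$ (resp.\ $|x|^\theta$ after splitting off the drift $\hat\ell$ using bounded variation), a Taylor estimate on the second derivatives to control the small jumps, localization with first exit times, and an optimization over $\epsilon$; c) uses the representation $|x|^\kappa=c_{\kappa,d}\int\left(1-\cos(x\cdot\xi)\right)|\xi|^{-\kappa-d}\,\d\xi$ together with $\left|1-\e^{-t\psi}\right|\leq 2\wedge\left(t|\psi|\right)$ and the assumption $\limsup_{|\xi|\to0}|\psi(\xi)||\xi|^{-\beta}<\infty$; d) uses $\e^{-u|x|}$ written through the Poisson kernel plus the lower bound $\RE\psi(\xi)\geq C_1|\xi|^\delta$. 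None of these steps can be extracted from your small-ball argument, so as a proof of the stated theorem the proposal has a major gap.

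For the one part you do prove, e), your argument is correct and takes a different route from the paper. The paper writes $|X_t|^{-\kappa}$ via $y^{-\kappa}=\Gamma(\kappa)^{-1}\int_0^\infty\e^{-uy}u^{\kappa-1}\,\d u$, represents $\e^{-u|x|}$ through the Poisson kernel, uses realness of $\psi$ to justify Fubini/Tonelli, and shows the resulting double integral diverges because $\int_{|\xi|}^\infty u^{\kappa-d-1}\,\d u=\infty$ for $\kappa\geq d$. You instead reduce to the small-ball estimate $\P(|X_t|<r)\geq C_t\,r^d$, proved by testing $\mu_t$ against $g=\I_{\{|x|<r/2\}}*\I_{\{|x|<r/2\}}$ and using Parseval with the non-negative, real integrand $\widehat\phi^{\,2}\e^{-t\psi}$; the layer-cake formula then gives divergence for $\kappa\geq d$. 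All the steps (positivity of $\e^{-t\psi}$ from $\psi$ real, $\widehat g\in L^1$, the scaling and continuity of $\widehat{\I_{\{|x|<1\}}}$ near $0$, restriction to $\{|\xi|\leq2\delta_0\}$, and $C_t>0$) check out, and your route even yields the slightly stronger quantitative statement that balls of radius $r$ carry mass $\gtrsim_t r^d$ whenever the law is symmetric, while the paper's computation is more in line with the Laplace-transform machinery it reuses for subordinators. But to count as a proof of the theorem you must still supply parts a)--d).
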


\begin{remark}\label{jn65cv}
    \textbf{\upshape a)}
        The technique (Taylor's theorem) used in the proof of
        Theorem \ref{hb43}\,a) below (see also
        the proof of Theorem \ref{hygvfj}\,a) below) can be used
        to get bounds on more general moments of
        L\'{e}vy processes,
        e.g.\ $\E|X_t|^\kappa$ with $\kappa>1$. However, the cost
        we have to pay is that the estimates may be too rough.

    \medskip\noindent\textbf{\upshape b)}
        It is well known that (cf.\ \cite[Theorem 25.3]{Sato})
the finiteness of moments of a L\'{e}vy process
depends only on the tail behaviour of
the underlying L\'{e}vy measure $\nu$, i.e.\ on the big jumps.
Theorem \ref{hb43}\,b)---it is essentially contained
in \cite[Theorem 2.1]{Mil71}, using a very different
method (Laplace transform)---requires also a condition on the small
jumps. This allows us to get a `clean' formula
for the $t$-dependence in \eqref{ppb5c3}.

\medskip\noindent\textbf{\upshape c)}
    Theorem \ref{hb43}\,b) is sharp for the Gamma subordinator,
    see Example \ref{exam} below.

\medskip\noindent\textbf{\upshape d)}
    Let $X$ be a symmetric $\alpha$-stable L\'evy process in $\R^d$ with $0<\alpha<2$. Then $\psi(\xi)=|\xi|^\alpha$ and
    we can choose in Theorem \ref{hb43}\,c)
    $\beta=\beta_0=\beta_\infty=\alpha$.
    For $t>0$ it is well known that $\E|X_t|^\kappa$ is finite if, and only if, $\kappa\in(0,\alpha)=(0,\beta)$, in which case $\E|X_t|^\kappa= t^{\kappa/\alpha} \E|X_1|^{\kappa}$. This means
    that Theorem \ref{hb43}\,c) is sharp
    for symmetric $\alpha$-stable L\'evy processes.

\medskip\noindent\textbf{\upshape e)}
    If the L\'{e}vy process has no dominating drift, then  $\limsup_{|\xi|\to0}|\psi(\xi)||\xi|^{-\beta}<\infty$ implies \eqref{b6c2} for $\kappa\in(0,\beta)$.
    The converse, however, is in general wrong.
    Thus the conditions in parts a) and c) of Theorem \ref{hb43} are incomparable.


\medskip\noindent\textbf{\upshape f)}
    For a one-dimensional symmetric $\alpha$-stable L\'evy process we have $\delta=\delta_\infty=\delta_0=\alpha$.
    By the scaling property, we have $\E|X_t|^{-\kappa}=C_{\kappa,\alpha}\,t^{-\kappa/\alpha}$ for $\kappa\in(0,1)$. Thus, Theorem \ref{hb43}\,d) is sharp for symmetric $\alpha$-stable L\'evy processes in $\R$.

\end{remark}

\begin{proof}[Proof of Theorem \ref{hb43}]
a)
   Rewrite $X_t$ as $X_t=\ell t+\widehat{X}_t$, $t\geq0$, where $\widehat X= (\widehat{X}_t)_{t\geq0}$ is the L\'evy process in $\R^d$
    generated by
    $$
        \widehat{\mathscr{L}}f
        =\int_{y\neq0} \left(f(y+\cdot)-f-
        y\cdot\nabla f\I_{(0,1)}(|y|)\right)\nu(\d y),
        \quad f\in C_b^2(\R^d).
    $$
    Noting that
    \begin{equation}\label{cr}
        (x+y)^a\leq x^a+y^a,
        \quad x,y\geq0,\,a\in[0,1],
    \end{equation}
    it suffices to show that for all $t>0$
    \begin{equation}\label{v5cs}
    \begin{aligned}
        \E|\widehat{X}_t|^\kappa
        &\leq \left(\int_{|y|\geq1}|y|^\kappa\,\nu(\d y)\right)t\\
        &\qquad\mbox{}+2\left(\frac{d}{2}\kappa(3-\kappa)\int_{0<|y|<1}|y|^2\,\nu(\d y)\right)^{\kappa/2}
        \big[1+\nu(|y|\geq1\,)t\big]^{1-\kappa/2}t^{\kappa/2}.
    \end{aligned}
    \end{equation}

    Fix $\epsilon>0$ and $t>0$. Let
    $$
        f(x):=\left(\epsilon+|x|^2\right)^{\kappa/2},
        \quad x\in\R^d,
    $$
    and
    \begin{equation}\label{j7v4}
        \taus_n:=\inf\left\{s\geq0\,:\,
        |\widehat{X}_s|>n
        \right\},\quad n\in\N.
    \end{equation}
    By the Dynkin formula we get for any $n\in\N$
    \begin{equation}\label{nhx9}
    \begin{aligned}
        \E&\left[\left(\epsilon+|\widehat{X}_{t\wedge\taus_n}|^2\right)^{\kappa/2}\right]-\epsilon^{\kappa/2}
        \:=\:\E\left[\int_{[0,t\wedge\taus_n)}\widehat{\mathscr{L}}f(\widehat{X}_s)\,\d s\right]\\
        &=\E\left[\int_{[0,t\wedge\taus_n)}\left(\int_{|y|\geq1}\left(f(\widehat{X}_s+y)-f(\widehat{X}_s)\right)\nu(\d y)\right)\d s\right]\\
        &\qquad\mbox{}+\E\left[\int_{[0,t\wedge\taus_n)}
        \left(\int_{0<|y|<1}\left(f(\widehat{X}_s+y)-f(\widehat{X}_s)- y\cdot \nabla f(\widehat{X}_s) \right)\nu(\d y)\right)\d s\right].
    \end{aligned}
    \end{equation}
    We estimate the two terms on the right side separately. For the first expression we have
    \begin{equation}\label{bx32}
        \int_{|y|\geq1}\left(f(\widehat{X}_s+y)-f(\widehat{X}_s)\right)\nu(\d y)
        \leq \epsilon^{\kappa/2}\nu(|y|\geq1)+ \int_{|y|\geq1}|y|^\kappa\,\nu(\d y).
    \end{equation}
    For the second term, we observe that for any $x\in\R^d$
    \begin{align*}
       \left|\frac{\partial^2f}{\partial x_j\partial x_i}(x)\right|
       &=\left|\kappa(\kappa-2)\left(\epsilon+|x|^2\right)^{\kappa/2-2}
       x_ix_j+\kappa\left(\epsilon+|x|^2\right)^{\kappa/2-1}\I_{\{i=j\}}\right|\\
       &\leq \kappa(2-\kappa)\left(\epsilon+|x|^2\right)^{\kappa/2-2}
       |x|^2+\kappa\left(\epsilon+|x|^2\right)^{\kappa/2-1}\\
       &\leq\kappa(2-\kappa)\epsilon^{\kappa/2-1}
       +\kappa\epsilon^{\kappa/2-1}\\
       &=\kappa(3-\kappa)\epsilon^{\kappa/2-1}.
    \end{align*}
    By Taylor's theorem,
    \begin{align*}
        f(\widehat{X}_s+y)-f(\widehat{X}_s)- y\cdot \nabla f(\widehat{X}_s)
        &=\frac12\sum_{i,j=1}^d\frac{\partial^2f}{\partial x_j\partial x_i}\big(\widehat{X}_s+\theta_{\widehat{X}_s,y}y\big) y_iy_j\\
        &\leq\frac12\kappa(3-\kappa)\epsilon^{\kappa/2-1}
        \sum_{i,j=1}^d|y_iy_j|\\
        &\leq\frac{d}{2}\kappa(3-\kappa)\epsilon^{\kappa/2-1}
        |y|^2,
    \end{align*}
    where $\theta_{\widehat{X}_s,y}\in[-1,1]$ depends on $\widehat{X}_s$ and $y$. Thus, we get
    $$
        \int_{0<|y|<1}\left(f(\widehat{X}_s+y)-f(\widehat{X}_s)- y\cdot \nabla f(\widehat{X}_s) \right)\nu(\d y)
        \leq\frac{d}{2}\kappa(3-\kappa)\epsilon^{\kappa/2-1}
        \int_{0<|y|<1}|y|^2\,\nu(\d y).
    $$
    Combining this with \eqref{bx32} and \eqref{nhx9}, we arrive at
    \begin{align*}
        \E\left[\big(\epsilon+|\widehat{X}_{t\wedge\taus_n}|^2\big)^{\kappa/2}\right]
        \leq\epsilon^{\kappa/2}
        &+\left(\epsilon^{\kappa/2}\nu(|y|\geq1) + \int_{|y|\geq1}|y|^\kappa\,\nu(\d y)\right) \E\left[t\wedge\taus_n\right]\\
        &+\left(\frac{d}{2}\kappa(3-\kappa)\epsilon^{\kappa/2-1}\int_{0<|y|<1}|y|^2\,\nu(\d y)\right) \E\left[t\wedge\taus_n\right].
    \end{align*}
    Since $\taus_n\uparrow\infty$ as $n\uparrow\infty$, we can let $n\uparrow\infty$ and use the monotone convergence theorem to obtain
    \begin{align*}
        \E|\widehat{X}_t|^\kappa
        &\leq \E\left[\big(\epsilon+|\widehat{X}_{t}|^2\big)^{\kappa/2}\right]\\
        &\leq\epsilon^{\kappa/2}
        +\left(\epsilon^{\kappa/2}\nu(|y|\geq1) +
        \int_{|y|\geq1}|y|^\kappa\,\nu(\d y) +
        \frac{d}{2} \kappa(3-\kappa)\epsilon^{\kappa/2-1} \int_{0<|y|<1}|y|^2\,\nu(\d y)\right)t\\
        &=\left(\int_{|y|\geq 1}|y|^\kappa\,\nu(\d y)\right)t\\
        &\qquad\mbox{}+\big[1+\nu(|y|\geq1)\,t\big]\epsilon^{\kappa/2} + \left[\frac{d}{2}\kappa(3-\kappa)\left(\int_{0<|y|<1}|y|^2\,\nu(\d y)\right)t\right]\epsilon^{\kappa/2-1}.
    \end{align*}
    Since $\epsilon>0$ is arbitrary, we can optimize
    in $\epsilon>0$, i.e.\  let
    $$
        \epsilon
        \downarrow
        \frac{\frac{d}{2}\kappa(3-\kappa)\left(\int_{0<|y|<1}|y|^2\,\nu(\d y)\right)t}{1+\nu(|y|\geq1)\,t},
    $$\normal
    to get \eqref{v5cs}.

\medskip\noindent b)
    Our assumption $\int_{0<|y|<1}|y|\,\nu(\d y)<\infty$ entails that $X$ has bounded variation. Therefore, $X_t=\hat{\ell}t+\widetilde{X}_t$, $t\geq0$, where $(\widetilde{X}_t)_{t\geq0}$ is a drift-free L\'evy process with generator
    $$
        \widetilde{\mathscr{L}}f=\int_{y\neq0}
        \big(f(y+\cdot)-f\big)\,\nu(\d y),
        \quad f\in C_b^2(\R^d).
    $$
    Let
    $$
        \sigma_n:=\inf\big\{t\,:\,|\widetilde{X}_t|>n\big\},\quad n\in\N.
    $$
    It follows from Dynkin's formula and \eqref{cr} that for any $\theta\in[\kappa,1]$ and $n\in\N$
    \begin{align*}
        \E|\widetilde{X}_{t\wedge\sigma_n}|^\theta
        =\E\left[\int_{[0,t\wedge\sigma_n)} \left(\int_{y\neq0}\left(|\widetilde{X}_{s}+y|^\theta-|\widetilde{X}_{s}|^\theta\right) \nu(\d y)\right)\d s\right]
        \leq \left(\int_{y\neq0}|y|^\theta\,\nu(\d y)\right)t.
    \end{align*}
    Since $\sigma_n\uparrow\infty$ as $n\uparrow\infty$, we can let $n\uparrow\infty$ and use the monotone convergence theorem to get
    $$
        \E|\widetilde{X}_{t}|^\theta
        \leq \left(\int_{y\neq0}|y|^\theta\,\nu(\d y)\right) t.
    $$
    Using \eqref{cr} again, we obtain that
    $$
        \E\left|X_{t}\right|^\theta
        \leq |\hat{\ell}|^\theta t^\theta + \E\big|\widetilde{X}_{t}\big|^\theta
        \leq |\hat{\ell}|^\theta t^\theta + \left(\int_{y\neq0}|y|^\theta\,\nu(\d y)\right) t.
    $$
    Together with Jensen's inequality, this yields for any $\theta\in[\kappa,1]$ and $t>0$
    $$
        \E|X_t|^\kappa\leq\left[\E|X_t|^\theta\right]^{\kappa/\theta}
        \leq\left[
        |\hat{\ell}|^\theta t^\theta
        +\left(\int_{y\neq0}|y|^\theta\,\nu(\d y)\right) t
        \right]^{\kappa/\theta},
    $$
    which implies the desired estimate.

\medskip\noindent c)
    Since $0<\kappa<\beta\leq\beta_0\leq2$, we have, see e.g.~\cite[III.18.23]{BF75},
    $$
        |x|^\kappa
        =c_{\kappa,d}\int_{\R^d\setminus\{0\}} \left(1-\cos(x\cdot \xi) \right)|\xi|^{-\kappa-d}\,\d\xi,\quad
        x\in\R^d,
    $$
    where
    $$
        c_{\kappa,d}:=\frac{\kappa 2^{\kappa-1}\Gamma\left(\frac{\kappa+d}
        {2}\right)}{\pi^{d/2}\Gamma\left(1-\frac{\kappa}{2}\right)}.
    $$
    By Tonelli's theorem, we get
    \begin{align*}
        \E|X_t|^\kappa
        &=c_{\kappa,d}\,\E\left[\int_{\R^d\setminus\{0\}} \left(1-\cos (X_t\cdot\xi) \right) |\xi|^{-\kappa-d}\,\d\xi\right]\\
        &=c_{\kappa,d}\int_{\R^d\setminus\{0\}} \left(1-\operatorname{Re}\E\,\e^{\i X_t\cdot\xi }\right) |\xi|^{-\kappa-d}\,\d\xi\\
        &=c_{\kappa,d}\int_{\R^d\setminus\{0\}} \left(1-\operatorname{Re}\e^{-t\psi(\xi)}\right) |\xi|^{-\kappa-d}\,\d\xi.
    \end{align*}
    Since $\operatorname{Re}\psi\geq 0$, we have
    $$
        \left|1-\operatorname{Re}\e^{-t\psi(\xi)}\right|
        \leq \left|1-\e^{-t\psi(\xi)}\right|
        \leq 2\wedge\big(t|\psi(\xi)|\big),
        \quad \xi\in\R^d\setminus\{0\}.
    $$
    If $\limsup_{|\xi|\to0}
    |\psi(\xi)||\xi|^{-\beta}<\infty$, then
    \begin{equation}\label{hhggfdv}
        |\psi(\xi)|\leq C_\beta|\xi|^\beta,\quad 0<|\xi|\leq 1.
    \end{equation}
    Thus, we find for all $t\geq 1$
    \begin{equation}\label{hg5dcc}
    \begin{aligned}
        \E|X_t|^\kappa
        &\leq c_{\kappa,d}\int_{\R^d\setminus\{0\}} \left[2\wedge\big(t\,|\psi(\xi)|\big)\right] |\xi|^{-\kappa-d}\,\d\xi\\
        &\leq c_{\kappa,d}\int_{0<|\xi|\leq t^{-1/\beta}} t\,|\psi(\xi)||\xi|^{-\kappa-d}\,\d\xi + c_{\kappa,d}\int_{|\xi|> t^{-1/\beta}}
        2\,|\xi|^{-\kappa-d}\,\d\xi\\
        &\leq c_{\kappa,d}C_\beta \,t \int_{0<|\xi|\leq t^{-1/\beta}} |\xi|^{\beta-\kappa-d}\,\d\xi
        + 2c_{\kappa,d} \int_{|\xi|> t^{-1/\beta}} |\xi|^{-\kappa-d}\,\d\xi\\
        &=c_{\kappa,d}C_\beta \,t^{\kappa/\beta} \int_{0<|\xi|\leq 1} |\xi|^{\beta-\kappa-d}\,\d\xi
        + 2c_{\kappa,d}\,t^{\kappa/\beta}\int_{|\xi|> 1} |\xi|^{-\kappa-d}\,\d\xi,
    \end{aligned}
    \end{equation}
    which implies the first estimate.
    If furthermore $\limsup_{|\xi|\to\infty}
    |\psi(\xi)||\xi|^{-\beta}<\infty$, then
    \eqref{hhggfdv} holds for all $\xi\in\R^d$, and
    so \eqref{hg5dcc} holds true for all $t>0$.
    This gives the second assertion.

\medskip\noindent d)
     Recall that
     $$
        \e^{-u|x|}
        =c_d \int_{\R^d}\frac{u}
        {\left(u^2+|\xi|^2\right)^{\frac{d+1}{2}}}
        \,\e^{-\i x\cdot\xi }
        \,\d\xi,
        \quad x\in\R^d,\,u>0,
    $$
    where
    $$
        c_d:=\pi^{-(d+1)/2}\,
        \Gamma\left(\tfrac{d+1}{2}\right).
    $$
    Using Fourier transforms, we have for all $u>0$ and $x\in\R^d$
    $$
         \int_{\R^d}|\xi|^{-d+\kappa}\,\e^{-u|\xi|}
         \,\e^{-\i x\cdot\xi}\,\d\xi
         =c_{\kappa,d}' \,c_d\int_{\R^d}|x-y|^{-\kappa}
         \frac{u}{\left(
         u^2+|y|^2
         \right)^{\frac{d+1}{2}}}\,\d y,
     $$
     where
     $$
         c_{\kappa,d}':=2^\kappa\pi^{d/2}
         \Gamma\left(\tfrac\kappa2\right) \Big/ \Gamma\left(\tfrac{d-\kappa}{2}\right).
     $$
     This implies that for all $n\in\N$ and $t>0$
     \begin{align*}
         \int_{\R^d}|\xi|^{-d+\kappa}\,\e^{-n^{-1}|\xi|}
         \RE\e^{\i X_t\cdot\xi}\,\d\xi
         &=c_{\kappa,d}' \,c_d
         \int_{\R^d}\left|X_t+y\right|^{-\kappa}
         \frac{n^{-1}}{\left(
         n^{-2}+|y|^2
         \right)^{\frac{d+1}{2}}}\,\d y\\
         &=c_{\kappa,d}' \,c_d
         \int_{\R^d}\left|X_t+n^{-1}y\right|^{-\kappa}
         \frac{\d y}{\left(
         1+|y|^2
         \right)^{\frac{d+1}{2}}}.
     \end{align*}
     Taking expectation and using Fubini's theorem, we get
     $$
         \int_{\R^d}|\xi|^{-d+\kappa}\,\e^{-n^{-1}|\xi|}
         \RE\e^{-t\psi(\xi)}\,\d\xi
         =c_{\kappa,d}' \,c_d\,
         \E\left[\int_{\R^d}\left|X_t+n^{-1}y\right|^{-\kappa}
         \frac{\d y}{\left(
         1+|y|^2
         \right)^{\frac{d+1}{2}}}\right].
     $$
     Combining this with Fatou's lemma and Tonelli's
     theorem, we obtain
     \begin{align*}
          \E|X_t|^{-\kappa}
          &=\E|X_t|^{-\kappa}\cdot c_d\int_{\R^d}\frac{\d y}{\left( 1+|y|^2 \right)^{\frac{d+1}{2}}}\\
          &=c_d\int_{\R^d} \E\left[ \liminf_{n\to\infty} \left|X_t+n^{-1}y\right|^{-\kappa} \right] \frac{\d y}{\left( 1+|y|^2 \right)^{\frac{d+1}{2}}}\\
          &\leq c_d\liminf_{n\to\infty}\E\left[ \int_{\R^d}\left|X_t+n^{-1}y\right|^{-\kappa} \frac{\d y}{\left( 1+|y|^2 \right)^{\frac{d+1}{2}}}\right]\\
          &=\frac{1}{c_{\kappa,d}'} \liminf_{n\to\infty} \int_{\R^d}|\xi|^{-d+\kappa}\,\e^{-n^{-1}|\xi|} \RE\e^{-t\psi(\xi)}\,\d\xi\\
          &\leq\frac{1}{c_{\kappa,d}'} \liminf_{n\to\infty} \int_{\R^d}|\xi|^{-d+\kappa}\,\e^{-n^{-1}|\xi|} \,\e^{-t\RE\psi(\xi)}\,\d\xi\\
          &=\frac{1}{c_{\kappa,d}'} \int_{\R^d}|\xi|^{-d+\kappa} \,\e^{-t\RE\psi(\xi)}\,\d\xi,
     \end{align*}
     where we have used the monotone convergence theorem. If $\liminf_{|\xi|\to\infty} \RE\psi(\xi)|\xi|^{-\delta}>0$, then there exist constants $C_1=C_1(\delta)>0$ and $C_2=C_2(\delta)\geq0$ such that
     \begin{equation}\label{h5fcbvb}
         \RE\psi(\xi)\geq C_1|\xi|^\delta,
         \quad |\xi|\geq C_2.
     \end{equation}
     Thus, we find for all $t>0$
     \begin{equation}\label{d343sdf}
    \begin{aligned}
         \E|X_t|^{-\kappa}&\leq\frac{1}{c_{\kappa,d}'}
         \left(
         \int_{|\xi|\leq C_2}|\xi|^{-d+\kappa}\,\d\xi
         +\int_{\R^d}|\xi|^{-d+\kappa}
         \,\e^{-tC_1|\xi|^\delta}\,\d\xi
         \right)\\
         &=\frac{1}{c_{\kappa,d}'}
         \left(
         \int_{|\xi|\leq C_2}|\xi|^{-d+\kappa}\,\d\xi
         +t^{-\kappa/\delta}\int_{\R^d}|\xi|^{-d+\kappa}
         \,\e^{-C_1|\xi|^\delta}\,\d\xi
         \right).
    \end{aligned}
    \end{equation}
    This gives the first assertion. If furthermore $\liminf_{|\xi|\to0}
     \RE\psi(\xi)|\xi|^{-\delta}>0$, then \eqref{h5fcbvb}
     holds with $C_2=0$, so that
     the second assertion follows
     by using \eqref{d343sdf} with $C_2=0$.

\medskip\noindent e)
   Using
    \begin{equation}\label{h6fc}
        \frac{1}{y^{r}}
        =\frac{1}{\Gamma(r)}\int_0^\infty\e^{-uy}u^{r-1}\,\d u,
        \quad r>0,\,y\geq 0
    \end{equation}
    and the Fourier transform formula from the beginning of part d), we get by Tonelli's theorem that
    \begin{align*}
        \E|X_t|^{-\kappa}
        &=\frac{1}{\Gamma(\kappa)}\, \E\left[\int_0^\infty\e^{-u|X_t|}u^{\kappa-1}\,\d u\right]\\
        &=\frac{c_d}{\Gamma(\kappa)}\int_0^\infty\E\left[\int_{\R^d}\e^{\i X_t\cdot\xi } \frac{u}{\left(u^2+|\xi|^2\right)^{\frac{d+1}{2}}}\,\d\xi
        \right] u^{\kappa-1}\,\d u.
    \end{align*}
    Since $\psi(\xi)\in\R$ for all $\xi\in\R^d$,
    $0<\e^{-t\psi(\xi)}\leq1$, and we can use Fubini's theorem
    for the inner integrals and then Tonelli's theorem for the
    two outer integrals to get
    \begin{align*}
        \E|X_t|^{-\kappa}
        &= \frac{c_d}{\Gamma(\kappa)} \int_0^\infty
        \left(\int_{\R^d} \e^{-t\psi(\xi)}\frac{u}{\left(u^2+|\xi|^2\right)^{\frac{d+1}{2}}}\,\d\xi\right)u^{\kappa-1}\,\d u\\
        &= \frac{c_d}{\Gamma(\kappa)} \int_{\R^d}
        \left(\int_0^\infty\frac{u^{\kappa}}{\left(u^2+|\xi|^2\right)^{\frac{d+1}{2}}}\,\d u\right)\e^{-t\psi(\xi)}\,\d\xi\\
        &\geq \frac{c_d}{\Gamma(\kappa)}\int_{\R^d}
        \left(\int_{|\xi|}^\infty\frac{u^{\kappa}}{\left(u^2+u^2\right)^{\frac{d+1}{2}}}\,\d u\right)\e^{-t\psi(\xi)}\,\d\xi\\
        &= \frac{c_d}{2^{\frac{d+1}{2}}\Gamma(\kappa)} \int_{\R^d}
        \left(\int_{|\xi|}^\infty u^{\kappa-d-1}\,\d u\right)\e^{-t\psi(\xi)}\,\d\xi
        =\infty,
    \end{align*}
    where the last equality follows from $\kappa\geq d$.
\end{proof}

Let us now consider $\E\,\e^{\lambda|X_t|^\kappa}$ for $\lambda>0$ and $\kappa\in\R\setminus\{0\}$. For $\kappa\in(0,1]$ the function $\R^d\ni x\mapsto\e^{\lambda|x|^\kappa} \in\R$ is submultiplicative; thus, $\E\,\e^{\lambda|X_t|^\kappa}<\infty$ for some (hence, all) $t>0$ if, and only if,
\begin{equation}\label{bt5cz}
    \int_{|y|\geq1}\e^{\lambda|y|^\kappa}
    \,\nu(\d y)<\infty,
\end{equation}
see~\cite[Theorem 25.3]{Sato}.

\begin{theorem}\label{hygvfj}
Let $X$ be a L\'evy process in $\R^d$ with L\'{e}vy
    triplet $(\ell,Q,\nu)$ and characteristic exponent
    $\psi$.

\medskip\noindent\textup{\bfseries a)}
    If $Q=0$, $\kappa\in(0,1]$
    and \eqref{bt5cz} holds, then for
    any $\lambda>0$ there is some (non-explicit)
    constant $C_{\kappa,\lambda}>0$ such that
    $$
        \E\,\e^{\lambda|X_t|^\kappa}
        \leq
        \begin{cases}
            \e^{C_{\kappa,\lambda}\,t^{\kappa/2}}, &\text{if\ \ } t<1,\\
            \e^{C_{\kappa,\lambda}\,t}, &\text{if\ \ } t\geq1.
        \end{cases}
    $$

\medskip\noindent\textup{\bfseries b)}
     If $Q=0$, $\kappa\in(0,1]$, \eqref{bt5cz} holds and
     \begin{equation}\label{j7ce}
        \int_{0<|y|<1}|y|^\kappa\,\nu(\d y)<\infty,
    \end{equation}
    then for any $\lambda,t>0$
    $$
      \E\,\e^{\lambda|X_t|^\kappa}
      \leq\exp\left[\lambda|\hat{\ell}|^{\kappa}t^{\kappa} + M_{\kappa,\lambda}\,t\right],
    $$
    where $\hat{\ell}$ is given by \eqref{pi8h} and
    $$
        M_{\kappa,\lambda}
        :=\int_{y\neq0}\big(
        \e^{\lambda|y|^\kappa}
        -1\big)\,\nu(\d y).
    $$

\medskip\noindent\textup{\bfseries c)}
    If $\nu\neq 0$ and $\kappa>1$, then for any $\lambda,t>0$
    $$
        \E\,\e^{\lambda|X_t|^\kappa}=\infty.
    $$

\medskip\noindent\textup{\bfseries d)}
    If $\psi$ is real-valued,
    then for any $\lambda,t,\kappa>0$
    $$
        \E\,\e^{\lambda|X_t|^{-\kappa}}=\infty.
    $$
\end{theorem}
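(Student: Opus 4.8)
The plan is to reduce the claim to Theorem \ref{hb43}\,e) by bounding the exponential from below by a single negative power of $|X_t|$ of sufficiently high order. Fix $\lambda,t,\kappa>0$ and choose an integer $n\geq 1$ so large that $n\kappa\geq d$; for instance $n:=\lceil d/\kappa\rceil$, which is indeed $\geq 1$ since $d\geq 1$ and $\kappa>0$. Using the elementary inequality $\e^z\geq z^n/n!$, valid for every $z\geq 0$ (it is just one term of the Taylor series, all of whose terms are nonnegative), applied with $z=\lambda|x|^{-\kappa}$, one obtains the pointwise estimate
\[
    \e^{\lambda|x|^{-\kappa}}\geq\frac{\lambda^n}{n!}\,|x|^{-n\kappa},\qquad x\in\R^d,
\]
where both sides are read as $+\infty$ at $x=0$, consistently with the convention $\frac10:=\infty$ adopted in the paper.

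Taking expectations in this inequality and using $\lambda^n/n!>0$ yields
\[
    \E\,\e^{\lambda|X_t|^{-\kappa}}\geq\frac{\lambda^n}{n!}\,\E|X_t|^{-n\kappa}.
\]
Since $\psi$ is real-valued by hypothesis and $n\kappa\geq d$, Theorem \ref{hb43}\,e)---applied with the exponent $n\kappa$ in place of $\kappa$ there---gives $\E|X_t|^{-n\kappa}=\infty$. Hence $\E\,\e^{\lambda|X_t|^{-\kappa}}=\infty$, which is the assertion. (Equivalently, one could expand $\e^{\lambda|X_t|^{-\kappa}}=\sum_{n\geq 0}\frac{\lambda^n}{n!}|X_t|^{-n\kappa}$, interchange sum and expectation by Tonelli, and note that each term with $n\kappa\geq d$ is infinite; the pointwise bound above is just a streamlined version of this.)

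I do not expect any genuine obstacle: the entire content is that the divergence of a suitable negative-power moment, already proved in part e) via the Frullani/Fourier computation, is automatically inherited by the exponential moment, because $\e^{\lambda|x|^{-\kappa}}$ grows faster than every power of $|x|^{-1}$ as $x\to 0$. The only minor points requiring a word are the choice of $n$ (any integer $n\geq d/\kappa$ works) and the reading of both sides at the origin, both of which are routine.
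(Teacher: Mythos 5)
Your argument is correct as far as it goes, and for part d) it is in fact identical to the paper's own proof: choose $n\in\N$ with $n\kappa\geq d$, use the pointwise bound $\e^{\lambda|x|^{-\kappa}}\geq\frac{\lambda^n}{n!}|x|^{-n\kappa}$, and invoke Theorem \ref{hb43}\,e) with exponent $n\kappa$. The small touches you add (the explicit choice $n=\lceil d/\kappa\rceil$, the reading of both sides at $x=0$, the Tonelli variant) are fine and unobjectionable.

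The genuine gap is that the statement you were asked to prove is the whole of Theorem \ref{hygvfj}, comprising parts a)--d), and your proposal addresses only part d). Parts a), b), c) are not reducible to Theorem \ref{hb43}\,e) and require different tools, which is why they constitute the bulk of the paper's proof. For a) one applies Dynkin's formula to $g(x)=\exp\bigl[\lambda(\epsilon+|x|^2)^{\kappa/2}\bigr]$ for the driftless pure-jump part, estimates the small-jump integral by a second-order Taylor bound of order $\epsilon^{\kappa/2-1}$ and the big-jump integral using the submultiplicativity-type bound coming from \eqref{bt5cz}, closes the resulting integral inequality with Gronwall's lemma, and finally optimizes over $\epsilon\in(0,1]$ to obtain the $t^{\kappa/2}$ (small $t$) versus $t$ (large $t$) behaviour. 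For b) one writes $X_t=\hat{\ell}t+\widetilde{X}_t$ with $\widetilde{X}$ drift-free of bounded variation (this uses \eqref{j7ce}), applies Dynkin's formula to $\e^{\lambda|x|^\kappa}$ together with the subadditivity $(x+y)^a\leq x^a+y^a$ to get the constant $M_{\kappa,\lambda}$, and again uses Gronwall. For c) one isolates a compound Poisson component whose jumps lie in a cone and are bounded away from the origin, bounds $|X_t^{(1)}+y|$ from below by $rN_t-|y|$ for a Poisson process $N$, and shows via Stirling's formula that the resulting series diverges precisely because $\kappa>1$; independence and Tonelli then give divergence for the full process. None of these ideas appear in your proposal, so as a proof of the stated theorem it is incomplete: only one of the four assertions is established.
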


\begin{remark}
    \textbf{\upshape a)}
        Since $\nu$ is a L\'evy measure, it is easy to see that \eqref{bt5cz} and \eqref{j7ce} imply $M_{\kappa,\lambda}<\infty$.

    \medskip\noindent\textbf{\upshape b)}
        It is well known, see e.g.\ \cite[Theorem 26.1\,(ii)]{Sato}, that
    $$
        \E\,\e^{\lambda|X_t|\log|X_t|}=\infty,\quad \lambda>0
    $$
    for any L\'evy process with L\'evy measure having unbounded support $\supp\nu$. Since for any $\kappa>1$ there exists a constant $C_\kappa > 0$ such that
    $$
        \e^{\lambda|x|\log|x|}\leq C_\kappa\e^{\lambda|x|^\kappa},
        \quad \lambda>0,\,x\in\R^d,
    $$
    this implies Theorem \ref{hygvfj}\,c) if $\supp\nu$ is
    unbounded; Theorem \ref{hygvfj}\,c), however, is valid for all non-zero $\nu$.
\end{remark}

\begin{proof}[Proof of Theorem \ref{hygvfj}]
a)
   Let $\widehat{X}$ be a L\'evy process in $\R^d$ with triplet $(0,0,\nu)$. It is enough to show that
    \begin{equation}\label{v5cx}
        \E\,\e^{\lambda|\widehat{X}_t|^\kappa}
        \leq \E\exp\left[\lambda\left(\epsilon+|\widehat{X}_t|^{2}\right)^{\kappa/2}\right]
        \leq \exp\left[\lambda\epsilon^{\kappa/2}\left(
        1+\varepsilon^{-1}C_1t\right)+C_2
        t\right]
    \end{equation}
    for all $\epsilon\in(0,1]$ and $t>0$, where
    $$
        C_1:=\frac{d}{2}\kappa(\lambda\kappa
        +3-\kappa)\e^{2\lambda}\int_{0<|y|<1}|y|^2\,\nu(\d y)
        ,\quad
        C_2:=\e^\lambda\int_{|y|\geq1}\e^{\lambda|y|^\kappa}\,
        \nu(\d y)-\nu\left(|y|\geq1\right).
    $$
    Fix $\epsilon\in(0,1]$ and $t>0$. Let
    $$
        g(x):=\exp\left[\lambda\left(\epsilon+|x|^2\right)^{\kappa/2}\right],
        \quad x\in\R^d,
    $$
    and define $\taus_n$ by \eqref{j7v4}. As in the proof of Theorem \ref{hb43}\,a) we can use a Taylor expansion to get for $s<t\wedge\taus_n$
    $$
        \int_{|y|\geq 1} \left(g(\widehat{X}_s+y)-g(\widehat{X}_s)\right)\,\nu(\d y)
        \leq C_2 g(\widehat{X}_s),
    $$
    $$
        \int_{0<|y|<1}\left(g(\widehat{X}_s+y) - g(\widehat{X}_s)- y\cdot \nabla g(\widehat{X}_s) \right)\,\nu(\d y)
        \leq C_1\lambda\epsilon^{\kappa/2-1}
        g(\widehat{X}_s).
    $$
    Now we use Dynkin's formula and Tonelli's theorem to obtain
    \begin{align*}
        \E\left[g(\widehat{X}_{t})\I_{\{t<\taus_n\}}\right]
        -\e^{\lambda\epsilon^{\kappa/2}}
        &\leq\E g(\widehat{X}_{t\wedge\taus_n}) - \e^{\lambda\epsilon^{\kappa/2}}\\
        &\leq \left(C_1\lambda\epsilon^{\kappa/2-1}+C_2\right)
        \E\left[\int_{[0,t\wedge\taus_n)} g(\widehat{X}_{s})\,\d s\right]\\
        &=\left(C_1\lambda\epsilon^{\kappa/2-1}+C_2\right)
        \int_0^t\E\left[g(\widehat{X}_{s})\I_{\{s<\taus_n\}}\right]\d s.
    \end{align*}
    From Gronwall's inequality we see
    $$
        \E\left[g(\widehat{X}_{t})\I_{\{t<\taus_n\}}\right]\leq\exp\left[\lambda\epsilon^{\kappa/2} + \left(C_1\lambda\epsilon^{\kappa/2-1}+C_2\right)t\right]
    $$
    for all $n\in\N$. Finally, \eqref{v5cx} follows as $n\uparrow\infty$.

\medskip\noindent b)
     As in the proof of Theorem \ref{hb43}\,b), we use Dynkin's formula, \eqref{cr} and Tonelli's theorem to obtain that for all $n\in\N$
    \begin{align*}
        \E\left[\e^{\lambda|\widetilde{X}_{t}|^\kappa}
        \I_{\{t<\sigma_n\}}\right]-1
        &\leq \E\,\e^{\lambda|\widetilde{X}_{t\wedge\sigma_n}|^\kappa}
        -1\\
        &=\E\left[\int_{[0,t\wedge\sigma_n)}
        \left(\int_{y\neq 0} \left(
        \e^{\lambda|\widetilde{X}_{s}+y|^\kappa}-
        \e^{\lambda|\widetilde{X}_{s}|^\kappa}
        \right)\,\nu(\d y)\right)\d s\right]\\
        &\leq\E\left[\int_{[0,t\wedge\sigma_n)}\left(\int_{y\neq 0}
        \e^{\lambda|\widetilde{X}_{s}|^\kappa}
        \left(
        \e^{\lambda|y|^\kappa}
        -1\right)\nu(\d y)\right)\d s\right]\\
        &=M_{\kappa,\lambda}\,\E\left[\int_{[0,t\wedge\sigma_n)}
        \e^{\lambda|\widetilde{X}_{s}|^\kappa}
        \d s\right]\\
        &=M_{\kappa,\lambda}
        \int_0^t \E\left[
        \e^{\lambda|\widetilde{X}_{s}|^\kappa}
        \I_{\{s<\sigma_n\}}\right]\d s.
    \end{align*}
    This, together with Gronwall's inequality, yields that
    $$
        \E\left[
        \e^{\lambda|\widetilde{X}_{t}|^\kappa}
        \I_{\{t<\sigma_n\}}\right]
        \leq \e^{M_{\kappa,\lambda}\,t},\quad n\in\N.
    $$
    It remains to let $n\uparrow\infty$ and
    use \eqref{cr} to get the desired result.

\medskip\noindent c)
     Since $\nu\neq 0$ we may, without loss of generality,
    assume that there exist some Borel set $B_1\subset\R$ with either
    $\inf B_1>0$ or $\sup B_1<0$ and Borel
    sets $B_2,\dots,B_d\subset\R\setminus\{0\}$ such that
    $$
        \eta:=\nu(\Lambda)\in(0,\infty),
    $$
    where $\Lambda := B_1\times B_2\times\dots\times B_d$. The jump times of jumps with size in the set $\Lambda$ define a Poisson process,
    say $(N_t)_{t\geq 0}$, with intensity $\eta$. Note that $X$ can be decomposed into two independent L\'evy processes
    $$
        X_t=X_t^{(1)}+X_t^{(2)},\quad t\geq0,
    $$
    where $X^{(1)}$ is a compound Poisson process with L\'evy measure $\nu|_\Lambda$, and $X^{(2)}$ is a L\'evy process with L\'evy measure $\nu-\nu|_\Lambda$; moreover, $X^{(1)}$ and $X^{(2)}$ are
    independent processes. Set
    $$
        r:=\left|\inf B_1\right|\wedge\left|\sup B_1\right|
        \in(0,\infty).
    $$
    By the triangle inequality we find for any $y\in\R^d$,
    $$
        |X_t^{(1)}+y|
        \geq |X_t^{(1)}|-|y|
        \geq r N_t - |y|.
    $$
    Using Stirling's formula
    $$
        n!
        \leq\sqrt{2\pi}n^{n+\frac12}\e^{-n+\frac{1}{12n}}
        \leq\sqrt{2\pi}n^{n+1}\e^{-n+1},
        \quad n\in\N,
    $$
    we obtain that for any $\lambda,t>0$
    \begin{align*}
        \E\,\e^{\lambda|X_t^{(1)}+y|^\kappa}
        &\geq \E\left[\e^{\lambda(rN_t-|y|)^\kappa}\I_{\{rN_t>|y|\}}\right]\\
        &= \sum_{n\,:\,rn>|y|}\e^{\lambda(rn-|y|)^\kappa}\,\frac{(\eta t)^n\e^{-\eta t}}{n!}\\
        &\geq\frac{\e^{-\eta t}}{\sqrt{2\pi}\,\e}
        \sum_{n\,:\,rn>|y|}\frac{(\eta\e t)^n\e^{\lambda(rn-|y|)^\kappa}}
        {n^{n+1}}
        =\infty,
    \end{align*}
    where the divergence is caused by $\kappa>1$. Combining this with Tonelli's theorem, we get
    \begin{gather*}
        \E\,\e^{\lambda|X_t|^\kappa}
        =\int_{\R^d}\E\,\e^{\lambda|X_t^{(1)}+y|^\kappa}\,
        \P\big(X^{(2)}_t\in\d y\big)
        =\infty.
    \end{gather*}

\medskip\noindent d)
    This follows from Theorem \ref{hb43}\,e) as we may choose $n\in\N$ such that $n\kappa\geq d$ and
    \begin{gather*}
        \E\,\e^{\lambda|X_t|^{-\kappa}}
        \geq \frac{\lambda^n}{n!}\,\E|X_t|^{-n\kappa}.
        \qedhere
    \end{gather*}
\end{proof}

\subsection{Subordinators}\label{sec32}

A subordinator is an increasing L\'{e}vy process in $\R$. Let $S=(S_t)_{t\geq0}$ be a subordinator with Bernstein function $\phi$ given by \eqref{bern}. Since $S$ is a L\'{e}vy process, all results of Subsection~3.1 hold with $X$ replaced by $S$.

The following example shows that the
result in Theorem \ref{hb43}\,b) is sharp for
Gamma subordinators.

\begin{example}\label{exam}
    Let $S=(S_t)_{t\geq0}$ be the Gamma process with parameters $\alpha,\beta>0$; this is a subordinator with
    $$
        b=0,
        \quad
        \nu(\d y) = \alpha y^{-1}\e^{-\beta y} \I_{(0,\infty)}(y)\,\d y.
    $$
    It is known that the distribution of $S_t$ at time $t>0$ is a $\Gamma(\alpha t,\beta)$-distribution, i.e.
    $$
        \P(S_t\in \d x)
        =\frac{\beta^{\alpha t}}{\Gamma(\alpha t)}\, x^{\alpha t-1}\e^{-\beta x}\I_{(0,\infty)}(x)\,\d x.
    $$
    Let $\kappa\in(0,1]$. Then we have
    $$
        G(t)
        :=\E S_t^\kappa=\frac{\Gamma(\alpha t+\kappa)}{\beta^\kappa\Gamma(\alpha t)}
    $$
    and
    $$
        H(t)
        :=\inf_{\theta\in[\kappa,1]}\left[t \int_{y>0}y^\theta\,\nu(\d y)\right]^{\kappa/\theta}
        = \frac{1}{\beta^\kappa} \inf_{\theta\in[\kappa,1]}\left[\alpha t\Gamma(\theta)\right]^{\kappa/\theta}.
    $$
    It is easy to check that
    $$
        \inf_{\theta\in[\kappa,1]}
        \left[\alpha t\Gamma(\theta)\right]^{\kappa/\theta}
        =
        \begin{cases}
            \alpha\Gamma(\kappa)t, &\text{if $t$ is small enough,}\\
            (\alpha t)^\kappa, &\text{if $t$ is large enough.}
        \end{cases}
    $$
    Since
    $$
        \lim_{t\downarrow 0}\frac{G(t)}{H(t)}
        = \lim_{t\downarrow 0}\frac{\Gamma(\alpha t+\kappa)}{\Gamma(\alpha t)\alpha\Gamma(\kappa)t}
        = \frac{1}{\Gamma(\kappa)} \lim_{t\downarrow0}\frac{\Gamma(\alpha t+\kappa)}{\Gamma(\alpha t+1)}
        =1,
    $$
    the upper bound in \eqref{ppb5c3} is sharp for small $t$. Moreover, by Stirling's formula
    \begin{equation}\label{stir}
        \Gamma(x)
        \sim\sqrt{2\pi}x^{x-\frac12}\e^{-x},\quad x\to\infty,
    \end{equation}
    one has
    $$
        \lim_{t\to\infty}\frac{G(t)}{H(t)}
        =\lim_{t\to\infty}\frac{\Gamma(\alpha t+\kappa)}{\Gamma(\alpha t)(\alpha t)^\kappa}
        =\frac{1}{\e^\kappa}\lim_{t\to\infty}\left(1+\frac{\kappa}{\alpha t}\right)^{\alpha t+\kappa-\frac12}
        =1.
    $$
    This means that \eqref{ppb5c3} is also sharp as $t\to\infty$.
\end{example}

We will need the following Blumenthal--Getoor index for subordinators
$$
    \sigma_\infty:=\inf\left\{\alpha\geq 0\,:\,\lim_{u\to\infty} \frac{\phi(u)}{u^\alpha}=0\right\}.
$$
Comparing this index with $\rho_\infty$ defined in \eqref{be1}, it is easy to see that $0\leq\rho_\infty\leq\sigma_\infty\leq1$. It is well known, cf.~\cite{BG61}, that
$$
    \sigma_\infty
    \geq\inf\left\{\alpha\geq 0\,:\,\int_{(0,1)}y^\alpha\,\nu(\d y)<\infty\right\}
$$
with equality holding for drift-free subordinators, i.e.\ $b=0$ in \eqref{bern}. It is also not hard to see, cf.~\eqref{ght56jn}, that
$$
    \sigma_\infty
    = \inf\left\{\alpha\geq 0\,:\,\limsup_{u\to\infty}\frac{\phi(u)}{u^\alpha}<\infty\right\}.
$$

Note that we can extend Bernstein functions
analytically onto the right complex half-plane
$\{z\in\C\,:\,\RE z>0\}$ and continuously onto
its closure $\{z\in\C\,:\,\RE z\geq0\}$,
see \cite[Proposition 3.6]{SSV}. Then the L\'{e}vy symbol of $S$ is
given by $\psi(\xi)=\phi(-\i\xi)$ for $\xi\in\R$. Since $\frac{\e-1}{\e}(1\wedge x)\leq
1-\e^{-x}\leq1\wedge x$ for all $x\geq0$ and
$\left|1-\e^{\i x}\right|\leq2\wedge|x|$ for all $x\in\R$, we
have for all $\xi\in\R\setminus\{0\}$
\begin{align*}
    \left|\phi(-\i\xi)\right|&\leq
    b|\xi|+\int_{(0,\infty)}\left|
    1-\e^{\i\xi x}\right|\,\nu(\d x)\\
    &\leq2\left(b|\xi|+\int_{(0,\infty)}
    \left(1\wedge\left[|\xi|x\right]\right)\,\nu(\d x)\right)\\
    &\leq\frac{2\e}{\e-1}\phi(|\xi|).
\end{align*}
This implies that for any $\alpha>0$
$$
    \limsup_{u\downarrow0}\frac{\phi(u)}{u^{\alpha}}<\infty
    \quad\Longrightarrow\quad
    \limsup_{|\xi|\to0}\frac{\left|\phi(-\i\xi)\right|}
    {|\xi|^{\alpha}}<\infty
$$
and
$$
    \limsup_{u\to\infty}\frac{\phi(u)}{u^{\alpha}}<\infty
    \quad\Longrightarrow\quad
    \limsup_{|\xi|\to\infty}\frac{\left|\phi(-\i\xi)\right|}
    {|\xi|^{\alpha}}<\infty.
$$
Thus, the following result is a direct consequence of
Theorem \ref{hb43}\,c).

\begin{corollary}\label{hgc}
    Assume that $\kappa\in(0,\sigma)$, where
    $\sigma\in(0,\sigma_0]$
    and $\limsup_{u\downarrow0}
    \phi(u)u^{-\sigma}<\infty$.\footnote{This is
    equivalent to either $0<\sigma<\sigma_0$ or $\sigma=\sigma_0>0$
    and $\limsup_{u\downarrow0}\phi(u)u^{-\sigma_0}<\infty$.}
    Then
    $$
        \E S_t^\kappa\leq C_{\kappa,\sigma}
        (t\vee1)^{\kappa/\sigma},
        \quad t>0.
    $$
    If furthermore $\limsup_{u\to\infty}
        \phi(u)u^{-\sigma}<\infty$,\footnote{This is
        equivalent to
    either $\sigma_\infty<\sigma\leq\sigma_0$ or
    $\sigma=\sigma_\infty\in(0,\sigma_0]$ and $\limsup_{u\to\infty}
    \phi(u)u^{-\sigma_\infty}<\infty$.}
    then
    $$
        \E S_t^\kappa\leq
        C_{\kappa,\sigma}\,t^{\kappa/\sigma},
        \quad t>0.
    $$
\end{corollary}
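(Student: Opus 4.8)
The plan is to obtain Corollary \ref{hgc} as an essentially immediate consequence of Theorem \ref{hb43}\,c), applied to the L\'evy process $X=S$ in $\R$ (so the dimension is $d=1$), and then to patch the small-time regime $0<t<1$ in the first assertion by hand.

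First I would recall that the characteristic exponent of $S$ is $\psi(\xi)=\phi(-\i\xi)$, $\xi\in\R$, and that the inequality $|\phi(-\i\xi)|\le\tfrac{2\e}{\e-1}\,\phi(|\xi|)$ established just above the corollary shows that the hypothesis $\limsup_{u\downarrow0}\phi(u)u^{-\sigma}<\infty$ forces $\limsup_{|\xi|\to0}|\psi(\xi)||\xi|^{-\sigma}<\infty$; by the characterization of $\beta_0$ recorded in Subsection~\ref{sec31} this in turn gives $\sigma\le\beta_0$. Hence the assumptions of Theorem \ref{hb43}\,c) are met with $\beta=\sigma$ and $\kappa\in(0,\sigma)$, and the theorem yields $\E S_t^\kappa\le C_{\kappa,\sigma}\,t^{\kappa/\sigma}$ for all $t\ge1$ (the dependence on $d=1$ being absorbed into the constant). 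In particular $\E S_1^\kappa\le C_{\kappa,\sigma}<\infty$.

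For $0<t<1$ the bound from Theorem \ref{hb43}\,c) is not directly available, so here I would use that $S$ has non-decreasing paths: $S_t\le S_1$ almost surely, whence $\E S_t^\kappa\le\E S_1^\kappa\le C_{\kappa,\sigma}=C_{\kappa,\sigma}(t\vee1)^{\kappa/\sigma}$. Combining the two ranges gives $\E S_t^\kappa\le C_{\kappa,\sigma}(t\vee1)^{\kappa/\sigma}$ for every $t>0$, which is the first assertion. For the second assertion I would invoke the additional implication $\limsup_{u\to\infty}\phi(u)u^{-\sigma}<\infty\Rightarrow\limsup_{|\xi|\to\infty}|\psi(\xi)||\xi|^{-\sigma}<\infty$, also proved above; this is exactly the extra hypothesis in the second half of Theorem \ref{hb43}\,c), so the estimate $\E S_t^\kappa\le C_{\kappa,\sigma}\,t^{\kappa/\sigma}$ then holds for all $t>0$.

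There is no genuine obstacle in this proof: it is a matter of translating between the Bernstein function $\phi$ and the symbol $\psi$ via $\psi(\xi)=\phi(-\i\xi)$ and invoking the footnoted equivalence linking the condition on $\sigma$ to the index $\sigma_0$. The only point that requires an argument beyond a direct citation is the regime $t<1$ in the first assertion, and there the pathwise monotonicity $S_t\le S_1$ settles it at once.
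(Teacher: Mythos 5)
Your argument is correct and is essentially the paper's own proof: the paper also deduces the corollary from Theorem \ref{hb43}\,c) with $\beta=\sigma$, $d=1$, using the bound $|\phi(-\i\xi)|\leq\frac{2\e}{\e-1}\phi(|\xi|)$ to transfer the hypotheses on $\phi$ to the symbol $\psi(\xi)=\phi(-\i\xi)$. Your explicit treatment of the regime $0<t<1$ via the pathwise monotonicity $S_t\leq S_1$ is exactly the (implicit) step needed to state the first bound in the form $C_{\kappa,\sigma}(t\vee1)^{\kappa/\sigma}$, so there is nothing to add.
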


\begin{remark}
    As in Remark \ref{jn65cv}\,d), it is easy to see that Corollary \ref{hgc} is sharp for the $\alpha$-stable subordinator, $0<\alpha<1$.
\end{remark}

Recall that $\sigma_0$ is defined
by \eqref{be0}. Let
$$
    \rho_0:=\inf\left\{\alpha\,:\,\liminf_{u\downarrow0}
    \frac{\phi(u)}{u^\alpha}>0\right\}
    =\sup\left\{\alpha\,:\,\liminf_{u\downarrow0}
    \frac{\phi(u)}{u^\alpha}=0\right\}.
$$
Because of \eqref{hg321cf}, it is easy to see that $0\leq\sigma_0\leq\rho_0\leq1$.

The following result is essentially due to \cite{GRW}. For the sake of completeness, we present the argument. The proof is based on the fact that the functions $x\mapsto x^{-\kappa}$
and $x\mapsto\e^{\lambda x^{-\kappa}}$, $\kappa,\lambda,x>0$, are completely monotone functions, cf.~\cite[Chapter 1]{SSV}.

\begin{theorem}\label{hyv3}
    \textup{\bfseries a)}
            Let $\rho_\infty>0$ and $\kappa>0$.
            If for some $\rho\in(0,\rho_\infty]$, $\liminf_{u\to\infty}\phi(u)
            u^{-\rho}>0$,\footnote{This
            is equivalent to either $0<\rho<\rho_\infty$ or
            $\rho=\rho_\infty>0$ and
            $\liminf_{u\to\infty}\phi(u)u^{-\rho_\infty}>0$.}
            then for all $t>0$
            $$
                \E S_t^{-\kappa}
                \leq\frac{C_{\kappa,\rho}}{(t\wedge1)^{\kappa/\rho}}.
            $$
            If furthermore $\liminf_{u\downarrow0}\phi(u)
            u^{-\rho}>0$,\footnote{This
            is equivalent to either $\rho_0<\rho\leq\rho_\infty$
            or $\rho=\rho_0\in(0,\rho_\infty]$ and
            $\liminf_{u\downarrow0}\phi(u)u^{-\rho_0}>0$.} then for all $t>0$
            $$
                \E S_t^{-\kappa}
                \leq\frac{C_{\kappa,\rho}}{t^{\kappa/\rho}}.
            $$

    \medskip\noindent\textup{\bfseries b)}
            Let $\rho_\infty>0$ and $\kappa\in\left(0,\rho_\infty/(1-\rho_\infty)\right)$.
            If $\liminf_{u\to\infty} \phi(u)u^{-\rho}>0$,\footnote{This is equivalent
            to either $\kappa/(1+\kappa)<\rho<\rho_\infty$
            or $\rho=\rho_\infty>\kappa/(1+\kappa)$
            and $\liminf_{u\to\infty}
            \phi(u)u^{-\rho_\infty}>0$.} for some $\rho\in\left(\kappa/(1+\kappa),\rho_\infty\right]$, then for all $\lambda,t>0$
            $$
                \E\,\e^{\lambda S_t^{-\kappa}}
                \leq
                \exp\left[C_{\kappa,\rho}
                    \left(\lambda + \left(\frac{\lambda}{t^{\kappa/\rho}}
                    \right)^{\frac{\rho}{\rho -(1-\rho)\kappa}}
                    +\frac{\lambda}{t^{\kappa/\rho}}\right)
                \right].
            $$
            In particular, for any $\lambda>0$
            $$
                \E\,\e^{\lambda S_t^{-\kappa}}
                \leq
                \exp\left[\frac{C_{\kappa,\rho,\lambda}}
                {(t\wedge1)^{\frac{\kappa}{\rho-
                (1-\rho)\kappa}}}
                \right],\quad t>0.
            $$
            If furthermore $\liminf_{u\downarrow0}\phi(u)
            u^{-\rho}>0$,\footnote{This
            is equivalent to either
            $\frac{\kappa}{1+\kappa}\vee\rho_0<\rho\leq\rho_\infty$
            or $\rho=\rho_0\in\left(\kappa/(1+\kappa),
            \rho_\infty\right]$ and $\liminf_{u\downarrow0}
            \phi(u)u^{-\rho_0}>0$.} then for all $\lambda,t>0$
            $$
                \E\,\e^{\lambda S_t^{-\kappa}}
                \leq
                \exp\left[C_{\kappa,\rho}
                    \left( \left(\frac{\lambda}{t^{\kappa/\rho}}
                    \right)^{\frac{\rho}{\rho -(1-\rho)\kappa}}
                    +\frac{\lambda}{t^{\kappa/\rho}}\right)
                \right].
            $$

    \medskip\noindent\textup{\bfseries c)}
        Let $\sigma_\infty<1$. If $\kappa>\sigma_\infty/(1-\sigma_\infty)$,
        then for all $\lambda,t>0$
        $$
           \E\,\e^{\lambda S_t^{-\kappa}}
           =\infty.
        $$
\end{theorem}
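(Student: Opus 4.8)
The plan is to prove a) and b) via Bernstein's theorem (using the complete monotonicity recalled just above the statement), essentially following \cite{GRW}, and to prove c) by a small-ball lower bound for $S_t$.

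\textbf{Part a).} Since $x\mapsto x^{-\kappa}$ is completely monotone, \eqref{h6fc} with $r=\kappa$, the identity $\E\,\e^{-sS_t}=\e^{-t\phi(s)}$, and Tonelli's theorem give $\E S_t^{-\kappa}=\frac1{\Gamma(\kappa)}\int_0^\infty\e^{-t\phi(s)}s^{\kappa-1}\,\d s$. From $\liminf_{u\to\infty}\phi(u)u^{-\rho}>0$ together with the fact that a non-trivial Bernstein function is positive, increasing and continuous on $(0,\infty)$, I would extract a constant $c=c_{\phi,\rho}>0$ with $\phi(s)\ge cs^\rho$ for all $s\ge1$, and, when additionally $\liminf_{u\downarrow0}\phi(u)u^{-\rho}>0$, for all $s>0$. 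Splitting the integral at $s=1$, using $\e^{-t\phi(s)}\le1$ on $(0,1)$ and $\e^{-t\phi(s)}\le\e^{-cts^\rho}$ on $[1,\infty)$, and evaluating $\int_0^\infty\e^{-cts^\rho}s^{\kappa-1}\,\d s=\Gamma(\kappa/\rho)\rho^{-1}(ct)^{-\kappa/\rho}$ via the substitution $v=cts^\rho$, yields $\E S_t^{-\kappa}\le C_{\kappa,\rho}\bigl(1+t^{-\kappa/\rho}\bigr)\le 2C_{\kappa,\rho}(t\wedge1)^{-\kappa/\rho}$; in the ``all $s>0$'' case the constant term $1$ is absent and one gets $C_{\kappa,\rho}\,t^{-\kappa/\rho}$.

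\textbf{Part b).} By complete monotonicity and Bernstein's theorem \cite[Chapter 1]{SSV} there is a measure $\eta_\lambda$ on $[0,\infty)$ with $\e^{\lambda x^{-\kappa}}=\int_{[0,\infty)}\e^{-xs}\,\eta_\lambda(\d s)$; letting $x\to\infty$ gives $\eta_\lambda(\{0\})=1$, and $\eta_\lambda((0,1))\le\e\int_{(0,\infty)}\e^{-s}\,\eta_\lambda(\d s)=\e(\e^\lambda-1)$. As in a), $\E\,\e^{\lambda S_t^{-\kappa}}=1+\int_{(0,\infty)}\e^{-t\phi(s)}\,\eta_\lambda(\d s)$. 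On $[1,\infty)$ I would use $\phi(s)\ge cs^\rho$ and the subordination identity $\e^{-cts^\rho}=\int_0^\infty\e^{-c^{1/\rho}t^{1/\rho}su}\,g_\rho(u)\,\d u$, where $g_\rho$ is the time-$1$ density of the standard $\rho$-stable subordinator; a further Tonelli step collapses the tail integral,
\[
  \int_{[1,\infty)}\e^{-t\phi(s)}\,\eta_\lambda(\d s)\le\int_{(0,\infty)}\e^{-cts^\rho}\,\eta_\lambda(\d s)=\E\,\e^{\nu\Sigma_\rho^{-\kappa}}-1,\qquad\nu:=c^{-\kappa/\rho}\lambda\,t^{-\kappa/\rho},
\]
with $\Sigma_\rho$ the value at time $1$ of that stable subordinator. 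This reduces matters to estimating $\E\,\e^{\nu\Sigma_\rho^{-\kappa}}=\sum_{n\ge0}\frac{\nu^n}{n!}\,\E\Sigma_\rho^{-n\kappa}$, where by a) (applied to the stable subordinator) $\E\Sigma_\rho^{-n\kappa}=\Gamma(n\kappa/\rho)\rho^{-1}\Gamma(n\kappa)^{-1}$ for $n\ge1$. Stirling's formula gives $\frac{\Gamma(n\kappa/\rho)}{n!\,\Gamma(n\kappa)}\le A_{\kappa,\rho}\,B_{\kappa,\rho}^{\,n}\,n^{-n/q}$ with $q:=\frac{\rho}{\rho-(1-\rho)\kappa}$, and the hypothesis $\rho>\kappa/(1+\kappa)$ is precisely the statement $q>0$ that makes the series converge (and $q\ge1$). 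The elementary maximum $\sup_{n\ge1}(a/n^{1/q})^n=\e^{a^q/(q\e)}$, together with the split of the sum into $n<(2a)^q$ and $n\ge(2a)^q$, then yields $\E\,\e^{\nu\Sigma_\rho^{-\kappa}}\le 1+C_{\kappa,\rho}(\nu+\nu^q)\,\e^{C_{\kappa,\rho}\nu^q}$. Feeding this and $\eta_\lambda((0,1))\le\e(\e^\lambda-1)$ back and then using $1+x\le\e^x$ and $2\sinh x\ge x$ to absorb the stray constants — recalling $q\kappa/\rho=\kappa/(\rho-(1-\rho)\kappa)$, so $\nu$ and $\nu^q$ are comparable to $\lambda t^{-\kappa/\rho}$ and $(\lambda t^{-\kappa/\rho})^q$ — gives the first displayed bound. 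The ``in particular'' version is its specialisation via $(\lambda t^{-\kappa/\rho})^q+\lambda t^{-\kappa/\rho}\le C_{\kappa,\rho,\lambda}(t\wedge1)^{-\kappa/(\rho-(1-\rho)\kappa)}$; and when $\liminf_{u\downarrow0}\phi(u)u^{-\rho}>0$ one has $\phi(s)\ge cs^\rho$ for all $s>0$, so the split at $s=1$ — hence the contribution $\eta_\lambda((0,1))$ and the summand $\lambda$ — is not needed.

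\textbf{Part c).} Since $\sigma\mapsto\sigma/(1-\sigma)$ is continuous and increasing on $[0,1)$ and equals $\sigma_\infty/(1-\sigma_\infty)<\kappa$ at $\sigma_\infty<1$, I would fix $\sigma\in(\sigma_\infty,1)$ with $\sigma/(1-\sigma)<\kappa$; by the definition of $\sigma_\infty$ there is $C_\sigma>0$ with $\phi(u)\le C_\sigma u^\sigma$ for $u\ge u_0$. From $\e^{-t\phi(u)}=\E\,\e^{-uS_t}\le\P(S_t\le\epsilon)+\e^{-u\epsilon}$ we get $\P(S_t\le\epsilon)\ge\e^{-t\phi(u)}-\e^{-u\epsilon}$ for all $u,\epsilon>0$. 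Taking $u=u(\epsilon):=(2C_\sigma t/\epsilon)^{1/(1-\sigma)}$ — which exceeds $u_0$ for $\epsilon$ small and satisfies $t\phi(u)\le C_\sigma t\,u^\sigma=\tfrac12 u\epsilon$ — forces $\e^{-u\epsilon}\le\tfrac12\e^{-t\phi(u)}$, hence $\P(S_t\le\epsilon)\ge\tfrac12\e^{-t\phi(u(\epsilon))}\ge\tfrac12\exp\bigl[-C_{\sigma,t}\epsilon^{-\sigma/(1-\sigma)}\bigr]$ for small $\epsilon$. Then
\[
  \E\,\e^{\lambda S_t^{-\kappa}}\ge\e^{\lambda\epsilon^{-\kappa}}\,\P(S_t\le\epsilon)\ge\tfrac12\exp\bigl[\lambda\epsilon^{-\kappa}-C_{\sigma,t}\,\epsilon^{-\sigma/(1-\sigma)}\bigr],
\]
which tends to $\infty$ as $\epsilon\downarrow0$ because $\kappa>\sigma/(1-\sigma)$; thus $\E\,\e^{\lambda S_t^{-\kappa}}=\infty$.

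The main obstacle is part b): one has to estimate the stable exponential moment $\E\,\e^{\nu\Sigma_\rho^{-\kappa}}$ sharply enough. The crucial observation is that both its finiteness and the exact growth exponent $q=\rho/(\rho-(1-\rho)\kappa)$ are governed by the Stirling asymptotics of $\Gamma(n\kappa/\rho)/(n!\,\Gamma(n\kappa))$, with $\rho>\kappa/(1+\kappa)$ being exactly the borderline $q>0$; one must then carry this estimate through the two reduction steps and assemble it into the stated closed form without any spurious multiplicative or additive constant, which is why it is essential to keep the ``$1+(\text{small})$'' structure throughout and use $1+x\le\e^x$ rather than cruder bounds.
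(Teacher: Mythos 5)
Your part a) is essentially the paper's proof (split the Laplace-type integral $\frac1{\Gamma(\kappa)}\int_0^\infty s^{\kappa-1}\e^{-t\phi(s)}\,\d s$ at a finite threshold using $\phi(s)\gtrsim s^\rho$). For parts b) and c) you take a genuinely different route, and both are correct. In b) the paper expands $\e^{\lambda x^{-\kappa}}$ directly as a power series, writes each term $x^{-n\kappa}$ via \eqref{h6fc}, and pushes the estimate $\phi(u)\ge C_1u^\rho$ term by term, finishing with explicit Stirling and $n^n\ge n!$ manipulations; you instead invoke Bernstein's theorem once, bound the ``far'' part of the representing measure by comparing $\phi$ with the pure $\rho$-stable Bernstein function and using the subordination identity $\e^{-cts^\rho}=\E\,\e^{-(ct)^{1/\rho}s\Sigma_\rho}$, which collapses everything to a single estimate of $\E\,\e^{\nu\Sigma_\rho^{-\kappa}}$ for the stable subordinator, after which the same Stirling computation applies. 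Your route is more conceptual (it makes the role of the exponent $q=\rho/(\rho-(1-\rho)\kappa)$ as the stable-case growth rate transparent and isolates the ``near-$0$'' mass $\eta_\lambda((0,1))\le\e(\e^\lambda-1)$ as the source of the extra $\lambda$ in the bound), at the cost of introducing the auxiliary stable time-change; the paper's route is self-contained and yields the constant fully explicitly. In c) the paper shows that an individual summand of the series representation for $\E\,\e^{\lambda S_t^{-\kappa}}$ diverges to $+\infty$ as $n\to\infty$; you instead derive a small-ball lower bound $\P(S_t\le\epsilon)\ge\tfrac12\exp[-C_{\sigma,t}\epsilon^{-\sigma/(1-\sigma)}]$ from the inequality $\E\,\e^{-uS_t}\le\P(S_t\le\epsilon)+\e^{-u\epsilon}$ and an optimizing choice of $u$, then let $\epsilon\downarrow0$ in $\E\,\e^{\lambda S_t^{-\kappa}}\ge\e^{\lambda\epsilon^{-\kappa}}\P(S_t\le\epsilon)$. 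This is shorter and arguably more illuminating, since it exhibits $\kappa>\sigma_\infty/(1-\sigma_\infty)$ as exactly the condition that $\lambda\epsilon^{-\kappa}$ overwhelms the small-ball exponent. (One small remark: in the compound-Poisson regime where $\phi$ is bounded, your choice of $u$ need not make $t\phi(u)\to\infty$, but there $\P(S_t=0)>0$ and the conclusion is immediate.)
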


\begin{remark}
       Let $S$ be an $\alpha$-stable subordinator ($0<\alpha<1$). Then
       we can choose in Theorem \ref{hyv3}\,a)
       $\rho=\rho_\infty=\rho_0=\alpha$.
       Note that $\E S_t^{-\kappa}=C_{\kappa,\alpha}\,t^{-\kappa/\alpha}$
       for all $\kappa,t>0$, see e.g.\ \cite[(25.5)]{Sato}.
       This means that Theorem \ref{hyv3}\,a) is sharp
       for $\alpha$-stable subordinators.
\end{remark}

\begin{proof}[Proof of Theorem \ref{hyv3}]
a)
    If $\liminf_{u\to\infty}\phi(u)
            u^{-\rho}>0$, then there exist constants $C_{1}=C_1(\rho)>0$ and $C_{2}=C_2(\rho)\geq0$ such that
    \begin{equation}\label{otr7}
        \phi(u)\geq C_{1}u^\rho,\quad u\geq C_{2}.
    \end{equation}
    Combining this with \eqref{h6fc}, we obtain
    \begin{equation}\label{fd332sd}
    \begin{aligned}
        \E S_t^{-\kappa}
        &= \frac{1}{\Gamma(\kappa)}\int_0^\infty u^{\kappa-1}\e^{-t\phi(u)}\,\d u\\
        &\leq \frac{1}{\Gamma(\kappa)}\int_0^{C_2}u^{\kappa-1}\,\d u + \frac{1}{\Gamma(\kappa)}\int_0^\infty u^{\kappa-1}\e^{-tC_1u^\rho}\,\d u\\
        &= \frac{C_2^\kappa}{\kappa\Gamma(\kappa)} + \frac{\Gamma\left(\frac\kappa\rho\right)}
        {\rho\Gamma(\kappa)(tC_1)^{\frac\kappa\rho}},
    \end{aligned}
    \end{equation}
    which implies the first assertion.
    If furthermore $\liminf_{u\downarrow0}\phi(u)
            u^{-\rho}>0$, then \eqref{otr7} holds with
    $C_2=0$. Thus, the second estimate follows
    from \eqref{fd332sd} with $C_2=0$.

\medskip\noindent b)
    It follows from \eqref{h6fc} that for $x\geq 0$
    \begin{align*}
        \e^{\lambda x^{-\kappa}}
        =1+\sum_{n=1}^\infty\frac{\lambda^n}{n!}\,\frac{1}{x^{n\kappa}}
        &=1+\sum_{n=1}^\infty\frac{\lambda^n}{n!}\frac{1}{\Gamma(n\kappa)}
        \int_0^\infty u^{n\kappa-1}\e^{-ux}\,\d u\\
        &=1+\int_0^\infty\e^{-ux}k(u)\,\d u,
    \end{align*}
    where
    $$
        k(u)
        :=\sum_{n=1}^\infty\frac{\lambda^n}{n!\Gamma(n\kappa)}\,u^{n\kappa-1},
        \quad u>0.
    $$
    Now we can use Tonelli's theorem to obtain
    \begin{equation}\label{ghy}
        \E\,\e^{\lambda S_t^{-\kappa}}
        =1+\E\left[\int_0^\infty\e^{-uS_t}k(u)\,\d u\right]
        =1+\int_0^\infty\e^{-t\phi(u)}k(u)\,\d u.
    \end{equation}
    If $\liminf_{u\to\infty}\phi(u)
            u^{-\rho}>0$, then by \eqref{otr7},
    \begin{align*}
        \E\,\e^{\lambda S_t^{-\kappa}}
        &\leq 1 + \int_0^{C_{2}} k(u)\,\d u + \int_{0}^\infty
        \e^{-C_{1}tu^\rho}
        k(u)\,\d u\\
        &= 1 + \sum_{n=1}^\infty \frac{\lambda^nC_2^{n\kappa}}{n!\,\Gamma(n\kappa)n\kappa} + \frac{1}{\rho}
        \sum_{n=1}^\infty \frac{\lambda^n\Gamma\left(\frac{n\kappa}{\rho}
        \right)}{n!\,\Gamma(n\kappa) \left(tC_1\right)^{\frac{n\kappa}{\rho}}}.
    \end{align*}
    Combining this with the inequalities
    \begin{gather*}
        \sqrt{2\pi}\,x^{x-\frac12}\e^{-x}
        \leq \Gamma(x)
        \leq \sqrt{2\pi}\,x^{x-\frac12}\e^{-x+\frac{1}{12x}},\quad x>0,\\
        n!\geq\sqrt{2\pi}\,n^{n+\frac12}\e^{-n},\quad n\in\N,
    \end{gather*}
    we arrive at
    \begin{align*}
        \E\,\e^{\lambda S_t^{-\kappa}}
        &\leq 1 + \frac{1}{2\pi\sqrt{\kappa}}\sum_{n=1}^\infty
        \frac{\left(\lambda C_2^\kappa\e^{\kappa+1}\kappa^{-\kappa}\right)^n}{nn^{(1+\kappa)n}}\\
        &\qquad\mbox{}+\frac{1}{\sqrt{2\pi\rho}}\sum_{n=1}^\infty
        \frac{\e^{\rho/(12n\kappa)}}{\sqrt{n}}\,
        n^{\left(\frac{\kappa}{\rho}-\kappa-1\right)n}
        \left(\frac{\e^{\kappa+1}}{\kappa^\kappa}\left(\frac{\kappa}{\rho\e C_1}\right)^{\kappa/\rho}
        \frac{\lambda}{t^{\kappa/\rho}}\right)^n\\
        &\leq 1 + \frac{1}{2\pi\sqrt{\kappa}}\sum_{n=1}^\infty
        \frac{\left(\lambda C_2^\kappa\e^{\kappa+1}\kappa^{-\kappa}\right)^n}{n^{(1+\kappa)n}}\\
        &\qquad\mbox{} + \frac{\e^{\rho/(12\kappa)}}{\sqrt{2\pi\rho}} \sum_{n=1}^\infty
        n^{\left(\frac{\kappa}{\rho}-\kappa-1\right)n}
        \left(\frac{\e^{\kappa+1}}{\kappa^\kappa}
        \left(\frac{\kappa}{\rho\e C_1}\right)^{\kappa/\rho}
        \frac{\lambda}{t^{\kappa/\rho}}\right)^n.
    \end{align*}
    Set $G:=\lambda C_2^\kappa\e^{\kappa+1}\kappa^{-\kappa}$; because of
    \begin{equation}\label{iu7y}
        n^n\geq n!,\quad n\in\N,
    \end{equation}
    we have
    $$
        \sum_{n=1}^\infty\frac{G^n}{n^{(1+\kappa)n}}
        \leq \sum_{n=1}^\infty\frac{G^n}{n^{n}}
        \leq \sum_{n=1}^\infty\frac{G^n}{n!}
        =\e^G-1.
    $$
    Set
    $$
        \epsilon
        :=-\frac{\kappa}{\rho}+\kappa+1\in(0,1]
        \quad\text{and}\quad
        H
        :=\frac{\e^{\kappa+1}}{\kappa^\kappa}\left(\frac{\kappa}{\rho\e C_1}\right)^{\kappa/\rho} \frac{\lambda}{t^{\kappa/\rho}}>0.
    $$
    Using Jensen's inequality and \eqref{iu7y}, it holds that
    \begin{align*}
        \sum_{n=1}^\infty\frac{H^n}{n^{\epsilon n}}
        =\sum_{n=1}^\infty\left(\frac{(2H)^{\frac{n}{\epsilon}}}{n^n}\right)^\epsilon\frac{1}{2^n}
        &\leq \left(\sum_{n=1}^\infty\frac{(2H)^{\frac{n}{\epsilon}}}{n^n}\frac{1}{2^n}\right)^\epsilon\\
        &\leq \left(\sum_{n=1}^\infty\frac{1}{n!}\left(\frac{(2H)^{\frac{1}{\epsilon}}}{2}\right)^n\right)^\epsilon\\
        &= \left(\exp\left[\frac{(2H)^{\frac{1}{\epsilon}}}{2}\right]-1\right)^\epsilon.
    \end{align*}
    Combining the above estimates and using the following elementary inequalities
    $$
        1+z(\e^x-1)^y\leq\e^{2xy+z(1+z)x^y}
        \quad\text{and}\quad
        \tfrac12\e^x+\tfrac12\e^y\leq\e^{x+y},\quad x,y,z\geq0,
   $$
    we obtain
    \begin{align*}
        \E&\,\e^{\lambda S_t^{-\kappa}}
        \leq 1
        + \frac{1}{2\pi\sqrt{\kappa}}\left(\e^G-1\right)
        +\frac{\e^{\rho/(12\kappa)}}{\sqrt{2\pi\rho}}
        \left(\exp\left[(2H)^{\frac{1}{\epsilon}}/2\right]-1\right)^\epsilon\\
        &= \frac12\left(1+\frac{1}{\pi\sqrt{\kappa}}\left(\e^G-1\right)\right)
        + \frac12\left(1+\frac{2\e^{\rho/(12\kappa)}}{\sqrt{2\pi\rho}}
        \left(\exp\left[(2H)^{\frac{1}{\epsilon}}/2\right]-1\right)^\epsilon\right)\\
        &\leq \frac12\exp\left[\left(2+\frac{1}{\pi^2\kappa}+\frac{1}{\pi\sqrt{\kappa}}\right)G\right]
        +\frac12\exp\left[(2H)^{\frac1\epsilon}\epsilon + \frac{2\e^{\rho/(12\kappa)}}{\sqrt{2\pi\rho}}
        \left(\frac{2\e^{\rho/(12\kappa)}}{\sqrt{2\pi\rho}}+1
        \right)\frac{2H}{2^\epsilon}\right]\\
        &\leq \exp\left[\left(2+\frac{1}{\pi^2\kappa}+\frac{1}{\pi\sqrt{\kappa}}\right)G
        + \epsilon(2H)^{\frac1\epsilon}
        + 2^{2-\epsilon}\frac{\e^{\rho/(12\kappa)}}{\sqrt{2\pi\rho}}
        \left(\frac{2\e^{\rho/(12\kappa)}}{\sqrt{2\pi\rho}}+1\right)
        H\right].
    \end{align*}
    Thus, the first assertion follows.
    If furthermore $\liminf_{u\downarrow0}\phi(u)
    u^{-\rho}>0$, then we can take $C_2=0$ and so $G=0$. Hence, the desired
    assertion follows from the above estimate with $G=0$.

\medskip\noindent c)
    Pick $\sigma\in\left(\sigma_\infty,\kappa/(1+\kappa)\right)$.
    By the definition of $\sigma_\infty$, there exist two positive constants $C_3=C_3(\sigma)$ and $C_4=C_4(\sigma)$ such that
    $$
        \phi(u)\leq C_3u^\sigma,\quad u\geq C_4.
    $$
    Together with \eqref{ghy} this yields that
    \begin{align*}
        \E\,\e^{\lambda S_t^{-\kappa}}
        &\geq \int_{C_4}^\infty \e^{-t\phi(u)}k(u)\,\d u\\
        &\geq \int_{C_4}^\infty \e^{-C_3tu^\sigma}k(u)\,\d u\\
        &= \sum_{n=1}^\infty\frac{\lambda^n}{n!\,\Gamma(n\kappa)
        \left(C_3t\right)^{\frac{n\kappa}{\sigma}}}
        \int_{C_3C_4^\sigma t}^\infty u^{\frac{n\kappa}{\sigma}-1}\e^{-u}\,\d u\\
        &= \sum_{n=1}^\infty\frac{\lambda^n}{n!\,\Gamma(n\kappa)
        \left(C_3t\right)^{\frac{n\kappa}{\sigma}}}
        \left[\Gamma\left(\frac{n\kappa}{\sigma}\right)
        - \int_0^{C_3C_4^\sigma t}u^{\frac{n\kappa}{\sigma}-1}\e^{-u}\,\d u\right]\\
        &\geq \sum_{n=1}^\infty\frac{\lambda^n}{n!\,\Gamma(n\kappa)
        \left(C_3t\right)^{\frac{n\kappa}{\sigma}}}
        \left[\Gamma\left(\frac{n\kappa}{\sigma}\right)
        - \frac{\sigma}{n\kappa} \left(C_3C_4^\sigma t\right)^{\frac{n\kappa}{\sigma}}\right].
    \end{align*}
    By \eqref{stir} and
    $$
        n!\sim\sqrt{2\pi}n^{n+\frac12}\e^{-n},
        \quad n\to\infty,
    $$
    we have for large $n\in\N$
    \begin{align*}
        &\frac{\lambda^n}{n!\,\Gamma(n\kappa)
        \left(C_3t\right)^{\frac{n\kappa}{\sigma}}}
        \left[\Gamma\left(\frac{n\kappa}{\sigma}\right)
        -\frac{\sigma}{n\kappa}\left(C_3C_4^\sigma t\right)^{\frac{n\kappa}{\sigma}}\right]\\
        &\quad \sim
        \frac{\lambda^n}{2\pi\kappa^{-\frac12}n^{(1+\kappa)n}
        \big(\e^{-\kappa-1}\kappa^\kappa\left(C_3t\right)
        ^{\frac{\kappa}{\sigma}}\big)^n}
        \left[\sqrt{\frac{2\pi\sigma}{n\kappa}}\, n^{\frac{n\kappa}{\sigma}}
        \left(\frac{\kappa}{\sigma\e}\right)^{\frac{n\kappa}{\sigma}}
        - \frac{\sigma}{n\kappa}\left(C_3C_4^\sigma t\right)^{\frac{n\kappa}{\sigma}}\right]\\
        &\quad\sim \frac{\lambda^n}{2\pi\kappa^{-\frac12}n^{(1+\kappa)n} \big(\e^{-\kappa-1}\kappa^\kappa
        \left(C_3t\right)^{\frac{\kappa}{\sigma}}\big)^n}
        \,\sqrt{\frac{2\pi\sigma}{n\kappa}} \,n^{\frac{n\kappa}{\sigma}}
        \left(\frac{\kappa}{\sigma\e}\right)^{\frac{n\kappa}{\sigma}}\\
        &\quad= \sqrt{\frac{\sigma}{2\pi}}\,\frac{1}{\sqrt{n}}\,
        n^{\left(-1-\kappa+\frac{\kappa}{\sigma}\right)n}
        \left(\frac{\lambda}{\e^{-\kappa-1}\kappa^\kappa}
        \left(\frac{\kappa}{\sigma\e C_3t}\right)^{\frac{\kappa}{\sigma}}\right)^n,
    \end{align*}
    which, because of $-1-\kappa+\frac{\kappa}{\sigma}>0$, tends to infinity as $n\to\infty$.
\end{proof}

\section{Appendix}

In this section, we show that
the index $\beta_0$ defined
by \eqref{bet} takes values in $[0,2]$.

Without loss of generality we may assume
that the L\'{e}vy measure $\nu\neq 0$; otherwise,
the assertion is trivial. Since
$$
    1-\cos u\geq(1-\cos1)u^2,\quad 0\leq u\leq1,
$$
we have for all $\xi\in\R^d$ that
\begin{align*}
    |\psi(\xi)|
    \geq \RE\psi(\xi)
    &=\frac12\xi\cdot Q\xi+\int_{y\neq0}
      \left(1-\cos|\xi\cdot y|\right)\,\nu(\d y)\\
    &\geq \int_{y\neq 0,\;|\xi\cdot y|\leq 1} \left(1-\cos|\xi\cdot y|\right)\nu(\d y)\\
    &\geq(1-\cos1) \int_{y\neq 0,\;|\xi\cdot y|\leq 1} |\xi\cdot y|^2\,\nu(\d y).
\end{align*}
Because $\nu\neq 0$, we know that there exists
a unit vector $x_0\in\R^d$ such that $\nu_D := \I_D\nu\neq0$, where
$$
    D:=\left\{z\in\R^d\setminus\{0\}:\,
    \arccos\frac{x_0\cdot z}{|z|}\in\left[0,\frac{\pi}{8}\right]\right\}.
$$
Since $\xi,y\in D$ satisfy
\begin{gather*}
    0
    \leq \arccos\frac{\xi\cdot y}{|\xi||y|}
    \leq \arccos\frac{\xi\cdot x_0}{|\xi|} + \arccos\frac{x_0\cdot y}{|y|}
    \leq \frac{\pi}{8}+\frac{\pi}{8}
    = \frac{\pi}{4},
\end{gather*}
we see
$$
    \frac{\xi\cdot y}{|\xi||y|}
    \geq \frac{1}{\sqrt{2}},\quad \xi,y\in D.
$$
Thus, we get for all $\xi\in D$ that
\begin{align*}
    \psi(\xi)
    &\geq(1-\cos1) \int_{y\in D,\;|\xi\cdot y|\leq 1} |\xi\cdot y|^2\,\nu(\d y)\\
    &\geq \frac{1-\cos1}{2}\,|\xi|^2 \int_{y\in D,\;|\xi\cdot y|\leq 1} |y|^2\,\nu(\d y),
\end{align*}
and, by Fatou's lemma,
\begin{align*}
    \liminf_{\xi\in D,\;|\xi|\to 0} \frac{|\psi(\xi)|}{|\xi|^2}
    &\geq\frac{1-\cos1}{2}\liminf_{\xi\in D,\;|\xi|\to0} \int_{\R^d}|y|^2\I_{\left\{|\xi\cdot y|\leq 1\right\}}\,\nu_D(\d y)\\
    &\geq\frac{1-\cos1}{2}\int_{\R^d}|y|^2 \liminf_{\xi\in D,\;|\xi|\to 0} \I_{\left\{|\xi\cdot y|\leq 1\right\}}\,\nu_D(\d y)\\
    &=\frac{1-\cos1}{2}\int_{\R^d}|y|^2\,\nu_D(\d y)\in(0,\infty].
\end{align*}
This shows that $\beta_0\leq2$.

\begin{ack}
    The authors would like to thank an anonymous referee for her/his insightful comments and useful suggestions which helped to improve the presentation of our manuscript.
\end{ack}

\end{document}